\title{Partitioning the vertices of a torus  into isomorphic subgraphs}
\author{Marthe Bonamy\thanks{CNRS, LaBRI, Universit\'e de Bordeaux, France. \newline E-mail: \texttt{marthe.bonamy@u-bordeaux.fr}.}
\thanks{Supported by the ANR Project DISTANCIA (ANR-17-CE40-0015) operated by the French National Research Agency (ANR).}  \and Natasha Morrison\thanks{Department of Pure Mathematics and Mathematical Statistics, University of
Cambridge, Wilberforce Road, Cambridge CB3 0WB, UK. \newline E-mail: \texttt{morrison@dpmms.cam.ac.uk}.} 
\thanks{Supported by a Research Fellowship from Sidney Sussex College, Cambridge.}
 \and Alex Scott\thanks{Mathematical Institute, University of Oxford, Woodstock Road, Oxford OX2 6GG, United Kingdom. \newline  E-mail: \texttt{scott@maths.ox.ac.uk}.}
\thanks{Supported by a Leverhulme Trust Research Fellowship.}}
\newtheoremstyle{case}{}{}{\normalfont}{}{\itshape}{:}{ }{}
\newtheorem{thm}{Theorem}[section]
\newtheorem{lem}[thm]{Lemma}
\newtheorem{conj}[thm]{Conjecture}
\theoremstyle{definition}
\newtheorem{defn}[thm]{Definition}
\newtheorem{qn}[thm]{Question}
\newtheorem*{ack}{Acknowledgement}
\newtheorem{claim}[thm]{Claim}
\newtheoremstyle{case}{}{}{\normalfont}{}{\itshape}{\normalfont:}{ }{}
\theoremstyle{case}
\numberwithin{equation}{section}
\begin{document}

\maketitle

\begin{abstract}
Let $H$ be an induced subgraph of the torus $C_k^m$. 
 We show that when $k \ge 3$ is even and $|V(H)|$ divides some power of 
 $k$, then for sufficiently large $n$ the torus $C_k^n$ has a perfect 
 vertex-packing with induced copies of $H$.  On the other hand, disproving 
 a conjecture of Gruslys, we show that when $k$ is odd and not a prime 
 power, then there exists $H$ such that $|V(H)|$ divides some power of $k$, 
 but there is no $n$ such that $C_k^n$ has a perfect vertex-packing with 
 copies of $H$. We also disprove a conjecture of Gruslys, Leader and Tan by 
 exhibiting a subgraph $H$ of the $k$-dimensional hypercube $Q_k$, such 
 that there is no $n$ for which $Q_n$ has a perfect edge-packing with 
 copies of $H$.

\end{abstract}

\section{Introduction}

For graphs $G$ and $H$, an $H$\emph{-packing of} $G$ is a collection of vertex-disjoint subgraphs of $G$ each isomorphic to $H$. An $H$-packing of $G$ is \emph{perfect} if every vertex in $V(G)$ is covered by the $H$-packing; and {\em induced} if the copies of $H$ in the packing are also induced subgraphs. If $G$ admits a perfect $H$-packing, then every vertex of $G$ must belong to a copy of $H$, and  $|V(H)|$ must divide $|V(G)|$: we shall refer to these as the \emph{base conditions} on $G$ and $H$. 

In a general graph $G$, the base conditions are not sufficient to guarantee a perfect $H$-packing, even when $G$ is vertex-transitive
(let $H$ be a $5\times5$ grid with the central vertex removed and let $G$ be an $n\times n$ torus, where $n$ is a multiple of 24).
However, the base conditions may be enough to guarantee a perfect $H$-packing of some {\em power} $G^n$ of $G$ (here $G^n$ denotes the 
$n$th Cartesian product of $G$).

For example,  Offner~\cite{offner} asked whether the base conditions on $H$ are sufficient to guarantee a perfect $H$-packing in $Q_n$ for $n$ sufficiently large with respect to $|V(H)|$. Recall that the $n$-dimensional hypercube, denoted by $Q_n$, is $(K_2)^n$.  This question was recently resolved by Gruslys~\cite{vyt}, who proved the following attractive result.

\begin{thm}[Gruslys~\cite{vyt}]\label{th:gru1}
Let $H$ be an induced subgraph of $Q_k$, for some $k\ge1$. 
If $|H|$ is a power of 2, then there is an integer $n_0=n_0(H)$ such that $Q_n$ admits a perfect induced $H$-packing for all $n \ge n_0$. 
\end{thm}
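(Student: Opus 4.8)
The plan is to first reduce to producing one good dimension. Indeed, fixing the last $n-n_0$ coordinates partitions $V(Q_n)$ into $2^{n-n_0}$ disjoint copies of $Q_{n_0}$, so if $Q_{n_0}$ has a perfect induced $H$-packing then so does $Q_n$ for every $n\ge n_0$; hence it suffices to exhibit one $n_0=n_0(H)$. Identify $V(Q_n)$ with $\mathbb{F}_2^n$ and fix an embedding $V(H)\subseteq\mathbb{F}_2^k$. Since coordinate permutations and coordinate flips (i.e.\ translations) are automorphisms of $Q_n$, every ``affine copy'' of $V(H)$ --- obtained by mapping the $k$ coordinates of $H$ into $[n]$ and translating by some $a\in\mathbb{F}_2^n$ --- induces a copy of $H$; I will try to build a perfect packing out of these.

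\textbf{Lattice tilings.} The simplest way to tile $\mathbb{F}_2^N$ is by a subspace of translates: if $H'$ is one affine copy of $H$ and $W\le\mathbb{F}_2^N$ is a subspace with $H'$ a transversal of $W$ (so $\dim W=N-t$ where $|V(H)|=2^t$), then the cosets $H'+w$ partition $\mathbb{F}_2^N$ into affine copies of $H$. Such a $W$ exists (already with $N=k$) if and only if there is a $(k-t)$-dimensional subspace of $\mathbb{F}_2^k$ meeting the difference set $D=\{v-v':v,v'\in V(H)\}$ only in $0$. Since $|D|\le 2^{2t}$, this works for a large class of $H$ (in particular for $H=Q_t$), but it can fail: for some $H$ every $(k-t)$-dimensional subspace of $\mathbb{F}_2^k$ hits $D\setminus\{0\}$, and enlarging the ambient cube does not help, since it is the trace of $W$ on $\mathbb{F}_2^k$ that is constrained. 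So a single orientation is not enough in general.

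\textbf{Gadgets and the linear reduction.} To handle arbitrary $H$ I would use a \emph{gadget}: a union $G$ of several affine copies of $H$ that itself tiles some $\mathbb{F}_2^N$ by a subspace of translates --- refining that tiling then finishes the proof, since each translate of $G$ splits into copies of $H$. To force such a $G$: take $2^{k-t}$ copies of $H$, pick a large $N$ and a linear surjection $\phi\colon\mathbb{F}_2^N\to\mathbb{F}_2^{k'}$ (some $k'\ge k$) whose restriction to the $k$-subcube carrying each copy is injective and realizes a prescribed injective linear map; choosing the translates so that the $\phi$-images are prescribed affine images of $V(H)$, the union of the copies maps onto $\mathbb{F}_2^{k'}$, hence (by a cardinality count) is a transversal of $\ker\phi$ --- provided those $\phi$-images partition $\mathbb{F}_2^{k'}$. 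This reduces the whole theorem to the following, now with \emph{all} invertible linear maps available, not just coordinate permutations: \emph{any $B\subseteq\mathbb{F}_2^k$ with $|B|$ a power of $2$ tiles $\mathbb{F}_2^{k'}$ for some $k'\ge k$, where the tiles are affine images of $B$ under injective linear maps $\mathbb{F}_2^k\to\mathbb{F}_2^{k'}$ followed by translations.}

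\textbf{Main obstacle.} Proving that last statement is where I expect the real work to be. I would try induction on $|B|$: split $B$ with a hyperplane $\ker\lambda$ into $B_0=B\cap\ker\lambda$ and $B_1=B\setminus\ker\lambda$ (so $B_0$ and a translate of $B_1$ both lie in a hyperplane), build tilings of suitable ambient cubes from linearizations of the pieces, and interleave them into a tiling of the whole. The delicate points are that $|B_0|$ and $|B_1|$ need not themselves be powers of $2$ (so the choice of $\lambda$, and possibly a passage to a larger ambient space, must be made carefully so the pieces mesh), and that the ambient dimension must stay finite --- this last dependence being exactly the source of $n_0=n_0(H)$. In the cases where the lattice tiling fails, $H$ tends to be very degenerate (e.g.\ close to an edgeless graph, or a disjoint union of small stars and isolated vertices), so if the clean linear statement resists a uniform proof, a fallback is to combine it with ad hoc graph-theoretic packings for those degenerate $H$; but I would expect the linear reduction above to be the right uniform route.
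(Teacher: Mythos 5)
Your gadget reduction is a genuinely nice idea: by placing honest coordinate-embedded, translated copies of $H$ in disjoint coordinate blocks upstairs and only reading off their images downstairs through a linear surjection $\phi$, you correctly get around the fact that the automorphisms of $Q_n$ are just coordinate permutations and translations, while gaining access to arbitrary injective linear images of $V(H)$ in the quotient. (Minor slip: you need $2^{k'-t}$ copies, not $2^{k-t}$, for the cardinality count.) The problem is that the proof stops exactly where the theorem starts. The reduced statement --- every $B\subseteq\mathbb{F}_2^k$ with $|B|$ a power of $2$ tiles some $\mathbb{F}_2^{k'}$ by translated injective linear images of $B$ --- carries essentially all of the difficulty of Theorem~\ref{th:gru1}, and you do not prove it. The hyperplane-splitting induction founders on precisely the obstacle you flag yourself: $|B\cap\ker\lambda|$ and $|B\setminus\ker\lambda|$ are in general not powers of $2$, so the induction hypothesis does not apply to the pieces, and no mechanism is given for making the two partial tilings mesh; no choice of $\lambda$ repairs this in general, and the fallback to ``ad hoc packings for degenerate $H$'' is not an argument. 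This is a genuine gap, not a missing detail.

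It is worth contrasting with how the result is actually proved (by Gruslys; Section~\ref{mainsec} of this paper generalises his argument to even tori). The key move there is to give up on exact tilings at intermediate stages: by Theorem~\ref{th:cover} (Gruslys, Leader and Tan), an $r$-cover together with a $(1 \bmod r)$-cover by induced copies of $H$, with $r=|V(H)|$, already forces a perfect packing. The $r$-cover is free (take all translates), and the $(1\bmod r)$-cover is built by induction on the dimension of $H$, using translates and ``bends'' into fresh coordinate directions to correct discrepancies one layer at a time modulo $r$. That modular relaxation is exactly the flexibility your exact-tiling induction lacks; without it, or some substitute for it, the splitting argument does not close.
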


In the infinite case,  Gruslys, Leader and Tan~\cite{imre1} proved the following beautiful result, which resolves a conjecture of Chalcraft~\cite{adam2,adam1}. 

\begin{thm}[Gruslys, Leader and Tan~\cite{imre1}]\label{imre}
Let $T$ be a non-empty finite subset of $\mathbb{Z}^k$ for some $k$, where $\mathbb{Z}^k$ is treated as a subspace of the metric space $\mathbb{R}^k$. Then, for sufficiently large $n$, the space $\mathbb{Z}^n$ can be partitioned into isometric copies of $T$.
\end{thm}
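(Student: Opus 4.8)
The plan is to reduce the statement to the construction of a single finite ``block'' in one fixed dimension, and then let monotonicity do the rest. Precisely, suppose we can find, for some $N$, a finite set $B\subseteq\mathbb{Z}^N$ that (i) can be partitioned into isometric copies of $T$, and (ii) tiles $\mathbb{Z}^N$ by translations along some finite-index sublattice $\Lambda\le\mathbb{Z}^N$ (the cleanest case being $B$ a box $\prod_{i=1}^N\{0,\dots,L_i-1\}$, which tiles with $\Lambda=\bigoplus_i L_i\mathbb{Z}$). Then the translates $B+v$, $v\in\Lambda$, partition $\mathbb{Z}^N$, and refining each of them by the internal partition from (i) gives a partition of $\mathbb{Z}^N$ into isometric copies of $T$. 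Finally, if $\mathbb{Z}^n$ has such a partition then so does $\mathbb{Z}^{n+1}=\mathbb{Z}^n\times\mathbb{Z}$: replace each copy $T'$ by the copies $T'\times\{j\}$, $j\in\mathbb{Z}$, each of which is again an isometric copy of $T$. So the theorem follows once we show: \emph{for every finite $T\subseteq\mathbb{Z}^k$ there exist $N\ge k$ and such a block $B\subseteq\mathbb{Z}^N$}.

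To build $B$ I would split the work into a trivial set-up step and a hard completion step. \emph{Set-up.} Fix $m$ with $T\subseteq\{0,\dots,m-1\}^k$. We are free to choose $N$ and the sublattice, so divisibility is never an obstruction: we can arrange that the target has index $[\mathbb{Z}^N:\Lambda]$ divisible by any desired power of $|T|$, and we can work inside arbitrarily large cubes so that isometric copies of $T$ have room to manoeuvre. \emph{Completion.} The remaining task is to show that a large enough cube $C$ --- or a box built out of several copies of $C$ --- admits a partition into isometric copies of $T$. Here I would argue by induction, using reflections of $T$ in coordinate hyperplanes together with translations and, crucially, \emph{fresh coordinate directions}: starting from the single copy $T$, repeatedly adjoin a reflected-and-translated copy of $T$ that cancels an extremal feature of the region built so far, pushing the leftover ``defect'' into one new dimension, until the union of all the chosen copies is exactly a box (or a box minus a sub-box in a corner, which by elementary facts on box tilings still tiles space). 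A concrete form of the inductive move is a doubling: pair the current region with a suitable reflection of it in a new axis so that the pair acquires an extra symmetry and fills out more of a slab $(\text{box})\times\mathbb{Z}$; iterating this a number of times controlled by the bounding box of $T$ should close up a genuine box.

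The whole difficulty sits in the completion step; everything else --- monotonicity, the reduction to a block, the box-tiling facts --- is routine. The reason completion is hard is that the argument must be uniform over all tiles $T$, and an arbitrary tile can be unforgiving. For instance, take $T=\{0,1,2,5\}\subseteq\mathbb{Z}$: the three collinear unit-spaced points $0,1,2$ force (by an easy computation with the defining distances) every isometric copy of $T$ in every $\mathbb{Z}^n$ to be axis-parallel, of the form $\{x,x+e_i,x+2e_i,x+5e_i\}$ or $\{x,x+3e_i,x+4e_i,x+5e_i\}$; moreover a short forcing argument shows that no family of these tiles, even allowing reflections, can partition $\mathbb{Z}^1$. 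So one cannot ``bend'' individual copies into genuinely higher-dimensional positions to close up a region --- all the extra dimensions must be exploited collectively --- and the construction must carefully coordinate which reflections and translations to use, how many new axes are needed, and verify that the chosen copies are pairwise disjoint with union exactly the intended box. Engineering this recursion is, I expect, essentially the entire content of the proof.
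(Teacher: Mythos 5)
This theorem is quoted in the paper as a result of Gruslys, Leader and Tan~\cite{imre1}; the paper does not reprove it, but Section~\ref{mainsec} adapts its proof machinery to tori, so a comparison is still possible. Your outer reductions are correct and routine, as you say: monotonicity in $n$ (crossing a partition of $\mathbb{Z}^n$ with $\mathbb{Z}$) and the reduction to a finite block $B$ that simultaneously tiles $\mathbb{Z}^N$ by lattice translations and splits into isometric copies of $T$. The problem is that your ``completion step'' --- which you correctly identify as the entire content of the theorem --- is not an argument but a hope. The move ``adjoin a reflected-and-translated copy that cancels an extremal feature, push the defect into a new dimension, iterate until the union is exactly a box'' comes with no invariant that guarantees disjointness of the accumulated copies, no measure of progress, and no termination criterion; nothing in the sketch rules out the defect simply migrating forever. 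Your own example $T=\{0,1,2,5\}$ shows how rigid the copies can be (every copy is an axis-parallel quadruple), so there is no reason a bounded collection of reflections and translations should ever close up into a box, and you supply none.

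The actual proof sidesteps exactly this difficulty by never attempting an exact partition of any finite region. Instead it works with \emph{covers with multiplicity}: one constructs an $r$-cover of a power of the ambient space (trivially, from all translates, with $r=|T|$) and a $(1 \bmod r)$-cover, the latter by an induction on the tile that peels off its top layer and ``bends'' copies into fresh coordinate directions --- this is the analogue of Lemma~\ref{lem:evenpack} in the present paper. A separate and genuinely nontrivial lemma (quoted here as Theorem~\ref{th:cover}) then converts the two covers into a perfect packing. The point is that a cover mod $r$ tolerates overlaps and arbitrary multiplicities, so the defect-cancellation you are reaching for can actually be carried out coordinate-layer by coordinate-layer; an exact tiling of a box tolerates neither, and is a strictly stronger (periodic) statement than what is needed. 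So the proposal as written has a genuine gap precisely where you place the difficulty, and the missing idea is the relaxation from exact partitions to modular covers together with the cover-to-packing lemma.
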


Motivated by these results, Gruslys made the following natural conjecture.

\begin{conj}[Gruslys~\cite{vyt,shovyt}]\label{vconj}
Let $G$ be a finite vertex-transitive graph and let $H$ be an induced subgraph of $G$. If $|V(H)|$ divides $|V(G)|$, then there exists a positive integer $n$ such that $G^n$ admits a perfect induced $H$-packing.
\end{conj}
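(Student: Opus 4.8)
The plan is to prove the conjecture by first reducing it to the existence of a \emph{single} good power, and then constructing such a power via a two-level product decomposition. Write $d = |V(H)|$ and $m = |V(G)|$, so that $d \mid m$ and hence $d \mid m^n = |V(G^n)|$ for every $n$; thus the base conditions hold in every power and impose no further restriction. The first observation is that it suffices to find one integer $r$ for which $G^r$ admits a perfect induced $H$-packing. Indeed, for any $s \ge 1$ one may split the $rs$ coordinates of $G^{rs}$ into the first $r$ and the remaining $r(s-1)$; for each fixed choice $y \in V(G)^{r(s-1)}$ of the latter, the set $V(G)^r \times \{y\}$ induces a copy of $G^r$ in the Cartesian power $G^{rs}$, and as $y$ varies these copies partition $V(G^{rs})$. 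Transporting a fixed perfect induced $H$-packing of $G^r$ to each fibre then yields a perfect induced $H$-packing of $G^{rs}$. So the entire difficulty is concentrated in producing one good power.

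To produce one, I would aim for a ``brick'' strategy: find an auxiliary subgraph $B$, realized as an induced subgraph of some fixed power $G^{a}$, such that (i) $B$ admits a perfect induced $H$-packing, and (ii) $G^{n}$ can be partitioned into induced copies of $B$ for some $n$. Composing (i) and (ii) gives a perfect induced $H$-packing of $G^n$, and the reduction above upgrades this to infinitely many powers. For step (ii) I would exploit vertex-transitivity through an algebraic model: when $G$ is a Cayley graph of a group $\Gamma$ (as for the torus, where $\Gamma = \mathbb{Z}_k^m$), the power $G^n$ is a Cayley graph of $\Gamma^n$, and a subgroup $\Lambda \le \Gamma^n$ of index $|V(B)|$ partitions $V(G^n)$ into cosets. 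The task is to choose $\Lambda$ and the brick $B$ compatibly, taking $B$ to be a suitable sub-box of $G^a$ and $\Lambda$ generated by translations respecting the box structure, so that every coset induces a copy of $B$.

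The real content lies in step (i): constructing a perfect induced $H$-packing of a single brick $B$, using only the divisibility $d \mid |V(B)|$ together with the freedom afforded by taking $a$ large. Here I would imitate the inductive constructions behind Theorems~\ref{th:gru1} and~\ref{imre}: first produce a \emph{near-perfect} packing of a box that leaves only a small, highly structured remainder, and then cancel the remainders of many parallel boxes against one another by using one or two extra product coordinates as ``buffer'' dimensions. The divisibility hypothesis is what should allow the leftover regions to be grouped into equal-sized families, and the extra dimensions provide room to reroute copies of $H$ across boxes so that no vertex is left uncovered.

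The step I expect to be the main obstacle is exactly this cancellation of the remainder, because divisibility of $|V(H)|$ into $|V(G)|$ is a purely numerical condition, whereas a perfect induced packing must also respect every \emph{global invariant} of $G^n$. Concretely, one must rule out the existence of a homomorphism $\phi$ from $V(G)$ to an abelian group under which the multiset of values covered by each induced copy of $H$ is forced to be constant: such a $\phi$ extends coordinatewise to $G^n$ and imposes a congruence on $|V(G^n)|$ that $d \mid m$ need not satisfy, obstructing every packing no matter how large $n$ is. Verifying that no such arithmetic obstruction exists — and, where it does, pinning down for which $G$ and $H$ it vanishes — is where I would expect the argument either to close or to break down, and it is the crux on which the truth of the conjecture ultimately turns.
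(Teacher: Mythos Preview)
The statement you are trying to prove is a \emph{conjecture}, and the paper does not prove it --- it \emph{disproves} it. Theorem~\ref{th:unpackabletori} constructs, for every odd $k$ that is not a prime power, an induced subgraph $H$ of $C_k^m$ with $|V(H)|$ dividing $k^m$ such that no power $C_k^n$ admits a perfect $H$-packing. Since $C_k$ is vertex-transitive, this is a counterexample to Conjecture~\ref{vconj}. So your proof plan cannot possibly close: the target statement is false.

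What is instructive is that the mechanism of the counterexample is precisely the obstruction you flag in your final paragraph. Writing $k=ab$ with $a<b$ odd and coprime, the paper builds $H\subseteq C_{ab}^2$ as a union of $a^t$ disjoint $a\times a$ boxes, containing two full ``axis'' cycles of length $ab$ through the origin. Because $ab$ is odd, the only $ab$-cycles in any $C_{ab}^n$ are coordinate lines; this pins down the image of those two cycles, and then $C_4$'s propagate the rigidity to all of $H$. The upshot is that every copy of $H$ in $C_{ab}^n$ is a translate of $H\times 0^{n-2}$ up to coordinate permutation. Now take the coordinatewise $\bmod\ a$ map $\phi:V(C_{ab}^n)\to(\mathbb{Z}/a)^n$: each box in $H$ hits every fibre of $\phi$ exactly once, so any copy of $H$ meets every fibre in a multiple of $a$ points. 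But each fibre has $b^n$ points and $a\nmid b^n$. This is exactly your ``homomorphism to an abelian group whose fibre counts impose a congruence that $d\mid m$ does not force''; here the obstruction does not vanish, and it kills the conjecture. Your reduction to a single good power and the brick strategy are fine heuristics when the conjecture holds (and something like them underlies the positive result for even $k$), but they cannot overcome a genuine arithmetic obstruction of this type.
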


Our first result proves that the conjecture holds when $G$ is a cycle of even length, or more generally a Cartesian power of an even cycle
(note that $Q_n$ can be thought of as a torus, so this extends Theorem~\ref{th:gru1}).
 
\begin{thm}\label{th:packabletori}
Let $k \ge 3$ be an even integer and let $H$ be an induced subgraph of the torus $C_k^m$ such that $|V(H)|$ divides $k^m$. There exists $n_0$ such that, for all $n \ge n_0$, $C_k^n$ admits a perfect induced $H$-packing. 
\end{thm}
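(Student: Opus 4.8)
The plan is to build the perfect induced $H$-packing of $C_k^n$ in stages, first packing a large torus with a convenient intermediate ``brick'' and then packing that brick with copies of $H$. The key observation is that $C_k^n \cong C_k^m \times C_k^{n-m}$, so it suffices to produce, for suitable $n$, a perfect induced packing of $C_k^n$ by copies of $C_k^m$ (which is trivial, taking the product structure directly) and separately a perfect induced packing of some power $C_k^N$ by copies of $H$; combining these via the product will then give a packing of $C_k^{N + m(\text{something})}$, and one checks that all sufficiently large $n$ are reachable by also using the trivial self-packing $C_k \times C_k^{n-1}$ to adjust the exponent by $1$ at a time once a single good value is found. So the heart of the matter is: given that $|V(H)|$ divides $k^m$, find \emph{one} integer $N$ such that $C_k^N$ has a perfect induced $H$-packing.

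For this I would first reduce to the case where $|V(H)| = k^m$ exactly. If $|V(H)| = k^j$ with $j < m$, then since $H$ is an induced subgraph of $C_k^m$ and $k$ is even, I can try to find an induced subgraph $H'$ of $C_k^{m'}$ for some $m'$ with $|V(H')| = k^{m'}$ that itself admits a perfect induced $H$-packing — essentially, pad $H$ out by taking a product-like union of translated copies of $H$ inside a larger torus, using the even-ness of $k$ to fit $k$ disjoint translates along a new coordinate. Iterating, it is enough to handle $H$ with $|V(H)|$ a \emph{full} power of $k$ equal to the dimension of the host torus. The main structural step is then to show that $C_k^m$ itself (for $k$ even) can be perfectly packed by induced copies of $H$ when $|V(H)| = k^m$: here one wants to exploit that $C_k$, for $k$ even, is bipartite and vertex-transitive with a free $\mathbb{Z}_k$-action on each coordinate, so $C_k^m$ carries a free $\mathbb{Z}_k^m$-action; a packing of $C_k^m$ by $k^m/|V(H)|$ copies of $H$ corresponds to choosing a set of translates, i.e. essentially a tiling of the group $\mathbb{Z}_k^m$ by a translate-set of $V(H)$, and when $|V(H)| = k^m$ there is a unique copy and nothing to do — so really the content is pushing the dimension up: find $n$ and a tiling of $\mathbb{Z}_k^n$ by translates of (a copy of) $V(H)$ that is moreover \emph{geometrically realizable as induced subgraphs} in $C_k^n$.

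The technical engine I would use is an adaptation of the Gruslys–Leader–Tan tiling machinery (Theorem~\ref{imre}) from $\mathbb{Z}^k$ to the quotient $\mathbb{Z}_k^n$: their result tiles all sufficiently high-dimensional integer lattices by isometric copies of a finite set, and since $k$ is even the ``wrap-around'' in $C_k$ is benign (an even cycle locally looks like $\mathbb{Z}$, and one can arrange the tiling on a fundamental domain that respects the $\mathbb{Z}_k$-periodicity in each coordinate). Concretely: view $H$ inside $C_k^m \subseteq$ a block of $\mathbb{Z}^m$, apply the infinite tiling theorem to get a tiling of $\mathbb{Z}^n$ (for large $n$) by copies of $V(H)$, then quotient by $(k\mathbb{Z})^n$; the even parity of $k$ is exactly what guarantees the quotiented copies remain \emph{induced} (no spurious adjacencies are created across the identification, because antipodal-type collisions are avoided) and remain pairwise disjoint. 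One must also verify $k^n / |V(H)|$ is an integer, which follows since $|V(H)| \mid k^m \mid k^n$.

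The step I expect to be the main obstacle is precisely ensuring the copies stay \emph{induced} after passing to the torus quotient: in $C_k^n$ two vertices at ``distance'' $k/2$ along some coordinate behave differently from the infinite case, and a naive quotient of a $\mathbb{Z}^n$-tiling can create new edges inside a copy of $H$ or merge distinct copies. I would handle this by taking the fundamental block in each coordinate to have side a multiple of $k$ and large compared to the diameter of $H$, so that each copy of $H$ produced by the infinite tiling lies within a region of $\mathbb{Z}^n$ that injects into $C_k^n$ with its induced subgraph structure intact; controlling how many copies straddle block boundaries, and repairing those, is the delicate part, and is where the hypothesis that $k$ is even (hence $C_k$ bipartite, hence $C_k^n$ bipartite with a clean distance-$2$ structure) is used decisively. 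The remaining bookkeeping — going from one good exponent $N$ to all $n \ge n_0$ — is routine via the product trick described above.
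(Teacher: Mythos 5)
Your proposal has genuine gaps at exactly the two places where the difficulty of the theorem lives. First, the reduction to $|V(H)|=k^{m}$ is circular: an induced subgraph $H'$ of $C_k^{m'}$ with $|V(H')|=k^{m'}$ is all of $C_k^{m'}$, so ``finding such an $H'$ admitting a perfect induced $H$-packing'' is literally the statement being proved; moreover the padding device of stacking $k$ disjoint translates along a new coordinate only multiplies $|V(H)|$ by powers of $k$, which never turns a proper divisor of $k^{m}$ into a full power of $k$ (take $k=4$, $|V(H)|=2$). Second, the ``technical engine'' is asserted rather than supplied. Theorem~\ref{imre} tiles the infinite lattice $\mathbb{Z}^{n}$ and gives no control on periodicity, so there is no reason its tiling descends to the quotient $(\mathbb{Z}/k\mathbb{Z})^{n}$; arranging the tiling to be exactly $k$-periodic in every coordinate (equivalently, repairing all copies that straddle the boundary of a fundamental block) is not routine bookkeeping but is essentially the whole problem. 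That the finite statement requires the divisibility hypothesis $|V(H)|\mid k^{n}$ while the infinite one requires none already shows the quotient cannot be taken for free. You also restrict at one point to tilings by \emph{translates} of $V(H)$; translates alone do not suffice in general, which is precisely why both Theorem~\ref{imre} and the actual argument must use copies that are genuinely bent or reflected.

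The paper's route is different and avoids these issues: by the Gruslys--Leader--Tan covering criterion (Theorem~\ref{th:cover}) it suffices to exhibit, for some $n_0$, an $r$-cover and a $(1 \bmod r)$-cover of $C_k^{n_0}$ by induced copies of $H$, where $r=|V(H)|$. The $r$-cover is simply the multiset of all translates; the $(1\bmod r)$-cover is constructed by induction on $m$ using ``bends'' of $H$ into fresh coordinate directions. This is also where evenness of $k$ actually enters --- not through bipartiteness or a clean distance-two structure, but because for even $k$ one can re-embed $H$ in a higher-dimensional torus so that it does not wrap around any coordinate, which is exactly the condition under which bends of $H$ remain induced copies of $H$. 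To salvage your outline you would need either a proof that the infinite tiling can be chosen $k$-periodic, or a replacement for that step; as written, the proposal does not constitute a proof.
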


However, in general Conjecture~\ref{vconj} is false. That is, the base conditions on $H$ are not always sufficient for a vertex-transitive graph to admit a perfect $H$-packing. This is demonstrated by our next theorem.

\begin{thm}\label{th:unpackabletori}
Let $k$ be an odd integer that is not a prime power. There exist $m\ge1$ and an induced subgraph $H$ of $C_{k}^m$ such that $|V(H)|$ divides $k^m$ but $C_{k}^n$ does not admit a perfect $H$-packing for any $n$. 
\end{thm}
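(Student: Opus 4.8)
The plan is to produce, for a suitably chosen induced subgraph $H$ of $C_k^m$ (with $m$ small), a family of \emph{potential functions} $\psi_n\colon V(C_k^n)\to\mathbb{F}_p$, where $p$ is a fixed prime dividing $k$, such that $\sum_{v\in U}\psi_n(v)=0$ for every copy of $H$ in $C_k^n$ while $\sum_{v\in V(C_k^n)}\psi_n(v)\neq 0$. Since an $H$-packing may use non-induced copies, ``copy'' here means an arbitrary subgraph of $C_k^n$ isomorphic to $H$; a perfect $H$-packing would write $\mathbb 1_{V(C_k^n)}=\sum_U\mathbb 1_U$, and applying $\psi_n$ to both sides is a contradiction. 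Equivalently we must show $\mathbb 1_{V(C_k^n)}\notin\operatorname{span}_{\mathbb F_p}\{\mathbb 1_U:U\text{ a copy of }H\}$ for every $n$. Two observations shape the approach. First, this has to be done over $\mathbb F_p$, not over $\mathbb Q$: the family of copies of $H$ is invariant under the translations and the coordinate permutations of $C_k^n$, so the span is a submodule of the group algebra $\mathbb F_p[(\mathbb Z/k)^n]=\mathbb F_p[X_1,\dots,X_n]/(X_i^k-1)$; over $\mathbb Q$ this algebra is semisimple and the submodule automatically contains the trivial-representation component $\mathbb Q\cdot\mathbb 1_V$, so no obstruction survives rationally (in effect a fractional perfect packing always exists). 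It is exactly the failure of semisimplicity of $\mathbb F_p[(\mathbb Z/k)^n]$ when $p\mid k$ that we exploit. Second, because $k$ is odd and \emph{not a prime power} we may fix distinct primes $p,q\mid k$, equivalently write $k=ab$ with $a,b\ge 3$ odd and $\gcd(a,b)=1$; the tension between the $p$-part and the $q$-part of $k$ is what gets encoded into $H$.

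The construction of $H$ is the heart of the matter. It should be (i) an induced subgraph of $C_k^m$ with $|V(H)|$ dividing $k^m$ and all prime factors of $|V(H)|$ among those of $k$; (ii) \emph{rigid}, so that in every $C_k^n$ the only subgraphs isomorphic to $H$ — induced \emph{or not} — are the images of one essentially unique embedding under $\operatorname{Aut}(C_k^n)$ together with the choice of which $m$ coordinates to use; and (iii) such that the resulting ideal does not contain $\mathbb 1_V$. For rigidity I would build into $H$ a ``skeleton'' that any copy must preserve: $C_k^n$ has odd girth exactly $k$ (an odd closed walk has length at least $k$, with equality forcing it to wind once around a single coordinate), so any $k$-cycle in $C_k^n$ — induced or not — is automatically ``straight''; incorporating such a cycle and decorating it asymmetrically to kill the remaining symmetries pins down the active coordinates and the embedding up to the dihedral action. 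A natural shape is a straight $C_k$ carrying a carefully placed set of pendant vertices, or a band/cylinder over $C_k$ sheared \emph{non-linearly} — one must avoid linear shears, since a linear shear is a group automorphism of $(\mathbb Z/k)^2$ and the image of a box under it still tiles by translations and hence would pack. The number and positions of the decorations are then tuned so that $|V(H)|$ becomes a legitimate divisor of $k^m$ and so that the ``shear''/decoration is visible in the cyclotomic component of $\mathbb F_p[X]/(X^k-1)$ at some divisor $d\mid k$ with $d>1$ and $p\nmid d$.

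Once $H$ and its copies are classified, $W_n:=\operatorname{span}_{\mathbb F_p}\{\mathbb 1_U\}$ is an explicit ideal of $\mathbb F_p[X_1,\dots,X_n]/(X_i^k-1)$, generated by the orbits of finitely many reference elements, and $\mathbb 1_{V(C_k^n)}=\prod_{i=1}^n(1+X_i+\cdots+X_i^{k-1})$. The final step is to exhibit a quotient of this ring — combining, coordinatewise, evaluation at a primitive $d$th root of unity with a single surviving nilpotent layer coming from $p\mid k$ — in which every generator of $W_n$ vanishes (this is where the non-linear shear/asymmetric decoration does its work: the ``shear polynomial'' of $H$ has a zero in this component) while $\prod_i(1+X_i+\cdots+X_i^{k-1})$ does not; the coprimality of $a$ and $b$ is used to guarantee the nilpotent layer is really present and that the two behaviours cannot be reconciled. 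Making this uniform in $n$ should be an induction that peels off one coordinate at a time, relating $W_n$ to $W_{n-1}$ according to whether a copy uses the last coordinate. I expect the \textbf{main obstacle} to be two-fold and to sit entirely in the design of $H$: first, proving that \emph{every} subgraph-copy of $H$ in an arbitrary $C_k^n$ is one of the controlled copies — not just the induced ones — which is precisely why a rigid skeleton forced by the odd-girth structure is indispensable, and whose rigorous verification for all $n$ is delicate; and second, simultaneously forcing $|V(H)|$ to divide a power of $k$ while keeping the shear/decoration ``visible mod $p$'' in the cyclotomic component. These two requirements pull against each other, and reconciling them — for \emph{every} odd non-prime-power $k$ at once — is the crux.
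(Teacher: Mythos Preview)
Your plan is essentially the paper's approach, stated in representation-theoretic language rather than bare-hands combinatorics. Both arguments hinge on (i) a rigidity lemma showing every subgraph-copy of $H$ in $C_k^n$ is, up to $\operatorname{Aut}(C_k^n)$, the obvious one, via the odd-girth fact that any $k$-cycle in $C_k^n$ is straight; and (ii) a mod-$p$ invariant that vanishes on every such copy but not on $V(C_k^n)$. The paper's invariant is exactly your $\psi_n$: fix one residue class $E=\{v:v_i\equiv 0\bmod a\text{ for all }i\}$ and take $\psi_n$ to be its indicator over $\mathbb{F}_p$ for $p\mid a$; then $\sum_{V(C_k^n)}\psi_n=b^n\not\equiv 0$ while $\sum_U\psi_n\equiv 0$ for every copy $U$. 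No cyclotomic components or nilpotent layers are needed.

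What the paper supplies, and what you correctly flag as the crux, is a concrete $H$ for which both halves go through simultaneously. Take $H\subseteq C_k^2$ to be a union of $a\times a$ boxes: all $2b-1$ boxes along the two coordinate axes (a ``cross''), plus enough extra boxes to bring the count to $a^t$ for some $t$ with $2b-1\le a^t<b^2$. Then $|V(H)|=a^{t+2}$ divides $k^{t+2}$. Rigidity uses \emph{two} straight $k$-cycles (the arms of the cross) meeting at the origin: once those are pinned by the odd-girth argument, every remaining vertex of $H$ lies in a $C_4$ with three already-pinned vertices, and the fourth vertex of a $C_4$ is determined by the other three. The invariant is then automatic: each $a\times a$ box meets every mod-$a$ residue class exactly once, so $H$ meets each class in $a^t\equiv 0\pmod a$ points, and rigidity transports this to every copy.

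Your suggested skeletons would likely fall short. A single $C_k$ with pendants is not rigid --- each pendant can swing into any of the $2(n-1)$ transverse directions in $C_k^n$ --- and a sheared band over $C_k$ has no second straight cycle to anchor the transverse coordinate. The two-cycle cross plus $C_4$-propagation is the clean fix, and it dissolves your second worry as well: the box structure makes the mod-$a$ count trivially zero without any delicate tuning against the divisibility constraint.
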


Note that, as $C_k^n$ is also edge-transitive, Theorem~\ref{th:unpackabletori} also disproves the natural weakening of Conjecture \ref{vconj} where $G$ is both vertex-transitive and edge-transitive. 

As well as vertex-packings, it is also interesting to consider {\em edge packings} of $H$, where we insist that our copies of $H$ are edge-disjoint.
As before, there are base conditions: it is necessary that $H$ is isomorphic to a subgraph of $G$ and that $|E(H)|$ divides $|E(G)|$. 
In light of Theorem~\ref{th:gru1},  Gruslys, Leader and Tan proposed the following conjecture. 

\begin{conj}[Gruslys, Leader and Tan~\cite{vyt}]\label{vtconj}
For $k \ge 1$, let $H$ be a non-empty subgraph of $Q_k$. Then there exists a positive integer $n$ such that the edges of $Q_n$ can be covered by edge-disjoint copies of $H$ (the copies of $H$ are not required to be induced).
\end{conj}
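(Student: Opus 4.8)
The plan is to reduce the conjecture to a single ``base case'' and then bootstrap with a clean product argument. First I would record the necessary divisibility condition: writing $|E(H)|=2^a b$ with $b$ odd, the quantity $|E(Q_n)|=n2^{n-1}$ is divisible by $|E(H)|$ as soon as $n\ge a+1$ and $b\mid n$, so the arithmetic base condition can always be met by choosing $n$ in a suitable residue class. The structural engine is the following product lemma. Viewing $Q_{a+b}=Q_a\,\square\,Q_b$, its edge set partitions into $2^{b}$ disjoint copies of $Q_a$ (one per vertex of $Q_b$) together with $2^{a}$ disjoint copies of $Q_b$ (one per vertex of $Q_a$): indeed each edge changes exactly one coordinate, lying either in the $Q_a$-part or the $Q_b$-part. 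Hence, if $Q_a$ and $Q_b$ each admit a perfect edge-decomposition into copies of $H$, then so does $Q_{a+b}$, by decomposing every horizontal copy of $Q_a$ and every vertical copy of $Q_b$ separately. Consequently the set $\{m: Q_m \text{ has a perfect } H\text{-edge-decomposition}\}$ is closed under addition, so it suffices to exhibit a \emph{single} dimension $m_0$ that works; the conjecture then follows for all $n=tm_0$.

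Everything therefore rests on constructing one good dimension, which I would attack algebraically. Identify $V(Q_n)$ with $\mathbb{F}_2^n$ and record each edge by its direction $i\in[n]$ and its lower endpoint $v$ (the endpoint with $v_i=0$); the automorphism group contains all translations $v\mapsto v+t$, which preserve directions, and all coordinate permutations $\sigma\in S_n$, which send direction $i$ to $\sigma(i)$. Fixing one embedded copy $H_0\subseteq Q_n$, its direction-$i$ edges form a set $A_i$ inside the hyperplane $\{v:v_i=0\}\cong\mathbb{F}_2^{n-1}$, and the image $g\cdot H_0$ under an automorphism $g$ has a correspondingly translated and permuted edge set. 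The goal is to choose a family $\mathcal{G}$ of automorphisms so that the images $\{g\cdot H_0:g\in\mathcal{G}\}$ partition $E(Q_{m_0})$: for each direction $j$ the translated contributions must tile the $2^{m_0-1}$ direction-$j$ edges exactly once. When $H$ is \emph{direction-balanced} (equally many edges in each coordinate) this reduces to a translation tiling of an $\mathbb{F}_2$-vector space, which I would produce from a subgroup/coset decomposition once the appropriate powers of two divide the per-direction edge count.

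To handle an arbitrary $H$ I would first try to assemble copies of $H$ into a balanced gadget. Averaging over all coordinate permutations sends the per-direction edge vector $(d_1,\dots,d_k,0,\dots,0)$ to a constant vector, so over the rationals balance is automatic; the task is to realise this integrally as an \emph{edge-disjoint} union of automorphic copies of $H$ inside a single cube $Q_{k'}$, after which the balanced construction above tiles some $Q_{m_0}$ and, by the product lemma, every large multiple of it.

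I expect this last step to be the main obstacle, and it is where the heart of the problem lies. Matching the per-direction edge totals is only a necessary \emph{averaged} condition; one must simultaneously place the copies so that they are genuinely edge-disjoint and leave no edge uncovered or doubly covered, and the admissible placements are tightly constrained by the bipartite, layered structure of $Q_n$. The per-direction totals yield a system of Diophantine equations that may fail to admit a nonnegative integer solution when the edge-counts $d_i$ of $H$ interact badly with the factor $n2^{n-1}$. Overcoming this would require either a clever explicit balancing gadget or an absorption argument that corrects a small leftover, and it is precisely here that a hidden arithmetic invariant could obstruct the construction.
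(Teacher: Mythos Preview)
The conjecture you are trying to prove is \emph{false}: the paper does not prove it but rather disproves it (this is Theorem~\ref{th:counter}). So any proof strategy must fail, and the honest thing to do is locate the exact point where yours does.

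Your product lemma and the closure-under-addition observation are correct, but they only help once you have a single dimension $m_0$ that works, and for the paper's graphs $H_k$ no such $m_0$ exists. The obstruction is precisely the ``hidden arithmetic invariant'' you allude to in your last sentence. For each $k\ge 5$ the paper builds $H_k$ with $Q_{k-1}\subseteq H_k\subseteq Q_k$ that is \emph{stiff}: for every isomorphic copy $H'\subseteq Q_n$, the partition of $E(H')$ by directions in $Q_n$ coincides (up to relabelling) with the partition of $E(H_k)$ by directions in $Q_k$. Moreover $H_k$ is already direction-balanced, with $|E_i(H_k)|=k\cdot 2^{k-1}-1$ for every $i\in[k]$. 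Stiffness means that your proposed averaging over coordinate permutations is powerless: any copy of $H_k$ in $Q_n$, no matter how it is translated or permuted, contributes exactly $k\cdot 2^{k-1}-1$ edges to each of $k$ directions and $0$ to the rest. Hence in any edge-disjoint family of copies the number of direction-$1$ edges covered is a multiple of $k\cdot 2^{k-1}-1$, an odd integer bigger than $1$, whereas $|E_1(Q_n)|=2^{n-1}$. These are never equal, so no edge-decomposition exists for any $n$.

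In short, your plan to manufacture a balanced gadget out of automorphic copies cannot repair anything here, because $H_k$ is already balanced and the per-direction count itself is the obstruction; stiffness guarantees this count is an invariant of every embedded copy, not just of the translates you single out.
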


Note that as $|E(Q_n)| = n2^{n-1}$, the divisibility condition on $H$ may only be satisfied for some particular values of $n$ (in constrast to the vertex problem, where it holds for all sufficiently large $n$ or not at all). Our final result strongly disproves Conjecture \ref{vtconj}.

\begin{thm}\label{th:counter}
For every $k \ge 5$, there exists a graph $H$ such that $Q_{k-1} \subseteq H \subseteq Q_k$ and there is no $n$ 
such that the edges of $Q_n$ can be covered by edge-disjoint copies of $H$.
\end{thm}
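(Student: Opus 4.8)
The plan is to find a subgraph $H$ with $Q_{k-1}\subseteq H\subseteq Q_k$ whose edge count creates an arithmetic obstruction that survives in every power $Q_n$. Write $q=|E(H)|$; since $Q_{k-1}$ has $(k-1)2^{k-2}$ edges and $Q_k$ has $k2^{k-1}$ edges, we have $q = (k-1)2^{k-2}+r$ for some $0\le r\le 2^{k-1}$ (the edges of $Q_k$ not in the fixed copy of $Q_{k-1}$, i.e.\ the ``matching'' edges in the last coordinate plus the second $Q_{k-1}$'s edges). The key idea: choose $H$ so that $q$ has a large power of $2$ in its factorization — specifically, aim for $q = 2^{k-1}$ exactly, or more flexibly $q$ equal to a power of $2$ that exceeds $2^{k-2}$. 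Then for $q$ to divide $|E(Q_n)| = n2^{n-1}$, we would need $2^{k-1}\mid n2^{n-1}$, which for large $n$ is automatic — so pure divisibility is \emph{not} the obstruction. Instead I would look for a \emph{local} or \emph{parity-of-coordinates} obstruction: colour the edges of $Q_n$ by their direction (coordinate) and by the parity of the coordinate-sum of their lower endpoint, and track how many edges of each colour class any copy of $H$ must use.

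Concretely, here is the mechanism I expect to work. In $Q_n$, partition $E(Q_n)$ into $n$ \emph{direction classes} $D_1,\dots,D_n$, where $D_i$ is the perfect matching of edges in direction $i$; each $|D_i| = 2^{n-1}$. Any embedding of a fixed graph $H$ into $Q_n$ induces, for each edge $e$ of $H$, an assignment of a direction in $[n]$; since $H$ is connected (assuming we pick it connected, which we can as it contains $Q_{k-1}$), the multiset of directions used is constrained — in particular, edges of $H$ lying on a common $4$-cycle must split $2{+}2$ between two directions, and more generally the directions used by $H$ form a structure governed by the cycle space of $H$. The plan is to engineer $H$ so that $|E(H)|$ is odd times a suitable power of two AND so that in any embedding the number of edges mapped into any single direction class $D_i$ has a forced parity or residue. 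Summing over a purported perfect edge-packing: each $D_i$ gets exactly $2^{n-1}$ edges total, contributed by the various copies of $H$; if each copy contributes a number of edges to $D_i$ that is, say, always even, but $|E(H)|$ is such that the copies can't all be arranged consistently, we get a contradiction. The cleanest version: make $|E(H)| = 2^{k-1}$, pick $H$ so every embedding uses each of its assigned directions an \emph{odd} number of times while using only an \emph{odd} number of distinct directions — forcing $|E(H)|$ odd, contradicting $|E(H)|=2^{k-1}$ unless no embedding exists into $Q_n$ for the packing to exist.

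More promising, and I think this is the intended route: use the substructure $Q_{k-1}\subseteq H$ to pin down the embedding's behaviour. Since $Q_{k-1}$ sits inside $H$, any copy of $H$ in $Q_n$ contains a copy of $Q_{k-1}$; copies of $Q_{k-1}$ in $Q_n$ are well understood (they correspond to choosing $k-1$ of the $n$ directions and fixing the rest), so the ``extra'' edges of $H$ beyond $Q_{k-1}$ attach in a controlled way. I would choose the extra edges of $H$ so that $|E(H)|$ is strictly between $(k-1)2^{k-2}$ and $k2^{k-1}$, equal to a value $q$ for which one can show: (a) $q \nmid n2^{n-1}$ for the small/medium $n$ where an embedding might exist, and (b) for large $n$ where $q\mid n2^{n-1}$, a direction-class counting argument rules out the packing. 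The honest expectation is that the construction must be tuned so both a divisibility obstruction (for some residues of $n$) and a structural/parity obstruction (for the rest) fire simultaneously — no single obstruction covers all $n$.

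The main obstacle, and where most of the work will go, is proving the structural non-packability for the infinitely many $n$ with $q\mid n2^{n-1}$: one must show that \emph{no} way of tiling $E(Q_n)$ by edge-disjoint copies of $H$ exists, and edge-packings are far less rigid than vertex-packings (copies can overlap on vertices, wind around many coordinates, etc.), so the direction-class / cycle-space invariant has to be genuinely robust. I would try to isolate a homomorphism $\phi\colon E(Q_n)\to A$ to a finite abelian group $A$ such that (i) $\sum_{e\in E(Q_n)}\phi(e)$ is a computable element $g_n\in A$, (ii) for every embedded copy $H'$ of $H$, $\sum_{e\in E(H')}\phi(e)$ equals a \emph{fixed} element $h\in A$ independent of the embedding, and then argue $g_n$ is not a multiple of $h$ in $A$ for the relevant $n$, even though $|E(H)|\bigm|\,|E(Q_n)|$. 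Designing $A$ and $\phi$ (likely $A = \mathbb{Z}_2^{\,t}$ recording direction-parities, or $A=\mathbb{Z}/2^s$) so that the per-copy sum is embedding-invariant — this uses that $H\supseteq Q_{k-1}$ to kill the degrees of freedom — is the crux, and verifying (ii) will be the bulk of the argument. If per-copy invariance fails outright, the fallback is a more delicate averaging over all copies in a hypothetical packing, exploiting that the total over each direction class is exactly $2^{n-1}$.
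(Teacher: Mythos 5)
Your general instinct---partition $E(Q_n)$ into direction classes $D_1,\dots,D_n$ of size $2^{n-1}$, use the copy of $Q_{k-1}$ inside $H$ to rigidify embeddings, and derive a counting obstruction within a single direction class---is exactly the shape of the paper's argument. But as written the proposal has three genuine gaps, and on the key numerical point it heads in the wrong direction. First, you never construct $H$. The paper takes $H_k$ to be $Q_k$ minus exactly one edge in each of the $k$ directions, chosen so that no vertex loses two of its edges (so every vertex keeps degree at least $k-1$, and some vertex keeps full degree $k$); for $k>5$ this is built inductively by gluing $H_k$ to a clean copy of $Q_k$ across a near-perfect matching. Second, the rigidity statement (the paper calls it \emph{stiffness}: every embedding $\phi$ of $H$ into $Q_n$ maps each direction class of $H$ onto a single direction class of $Q_n$) is the crux, and you explicitly defer it (``verifying (ii) will be the bulk of the argument''). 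The paper proves it by starting at a degree-$k$ vertex, whose $k$ incident edges must receive $k$ distinct directions, and propagating along $4$-cycles: in $Q_n$ a $4$-cycle uses exactly two directions, so once two adjacent edges of a $4$-cycle have forced directions the other two are forced; the minimum-degree condition guarantees enough $4$-cycles to reach every edge of $H$. None of this is present in your sketch, and without it the ``per-copy invariance'' you hope for is unproven.

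Third, and most importantly, you do not identify the obstruction that actually closes the argument, and some of your candidate choices would destroy it. Once stiffness holds, every copy of $H$ in $Q_n$ meets each direction class it touches in exactly $2^{k-1}-1$ edges (one edge was deleted per direction). Since $2^{k-1}-1$ is odd and greater than $1$, it cannot divide $|D_1|=2^{n-1}$, a power of two; so $D_1$ cannot be exactly covered, for \emph{every} $n$ at once. This is a single obstruction, local to one direction class, and it makes your speculation that ``no single obstruction covers all $n$'' (requiring a case split between divisibility and parity arguments) unnecessary and misleading. Worse, your suggestion to arrange $|E(H)|$ (or the per-direction count) to be a power of two, or to be ``always even,'' would be fatal: an even per-direction contribution is perfectly compatible with covering a class of even size $2^{n-1}$, and the global condition $|E(H)| \mid n2^{n-1}$ is in fact satisfiable for infinitely many $n$ with the paper's $H$ --- the whole point is that the local per-direction count, not the total edge count, is what obstructs. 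So the proposal is a reasonable research plan but not a proof: the construction, the stiffness lemma, and the correct choice of the per-direction residue all still need to be supplied.
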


The rest of the paper is structured as follows. In the next subsection we define the standard notation we will use throughout. The proof of Theorem~\ref{th:packabletori} 
uses a powerful tool which was developed by Gruslys, Leader and Tan~\cite{imre1}: we introduce this result in 
Section~\ref{mainsec} and then prove Theorem~\ref{th:packabletori}. 
Section~\ref{tor2} contains the proof of Theorem~\ref{th:unpackabletori}, and Theorem~\ref{th:counter} is proved in Section~\ref{sec:edges}.
We conclude with some discussion in Section \ref{thatsall}.

\subsection{Notation}
In this subsection we collect together some notation that is used throughout the rest of the paper. 

For $m \ge 1$, the torus $C_k^m$ has vertex set $\{0,\ldots, k-1\}^m$, where two vertices $(v_1,\ldots, v_{m})$ and $(u_1,\ldots,u_{m})$ are adjacent if and only if $v_i \not= u_i$ for exactly one $i \in \{1,\ldots, m\}$ and $|v_i - u_i| \equiv 1$ (modulo $k$). By definition, when $m=0$, $C_k^0$ is a single point. 

\begin{defn}\label{basis}
For $n \in \mathbb{N}$ and $j \le n$, let $e^n_j$ denote the vector in $\{0,1\}^n$ with $i$-th co-ordinate equal to $\delta_{ij}$ (for example, $e_2^4 = (0,1,0,0)$). Let $0^n$ denote the vector of length $n$ where every co-ordinate is 0. For example $0^3 = (0,0,0)$. For integers $n,k,s$ and any $x = (x_1,\ldots,x_n) \in \{0,\ldots, k-1\}^n$ and $y = (y_1,\ldots,y_s) \in \{0,\ldots,k-1\}^s$ we write $x \times y$ to denote the vector $(v_1,\ldots,v_{n+s}) \in \{0,\ldots,k-1\}^{n+s}$ where $v_i = x_i$ for $1 \le i \le n$ and $v_i = y_{i-n}$ for $i>n$.

For $X \subseteq \{0,\ldots, k-1\}^n$, $Y \subseteq \{0,\ldots, k-1\}^s$, $\mathcal{X} \subseteq \mathcal{P}(\{0,\ldots, k-1\}^n)$ and $\mathcal{Y} \subseteq \mathcal{P}(\{0,\ldots, k-1\}^s)$ we also define: $X \times y := \{x \times y: x \in X\}$; $X \times Y:= \{x \times y: x \in X, y \in Y\}$; $\mathcal{X} \times y := \{X \times y: X \in \mathcal{X}\}$, and; $\mathcal{X} \times \mathcal{Y}:= \{X \times Y: X \in \mathcal{X}, Y \in \mathcal{Y}\}$.
\end{defn}

We will also need notation to deal with multisets. 

\begin{defn}\label{multi}
Let $\mathcal{X}$ be a collection of subsets of $C_k^m$ ($\mathcal{X}$ may be a multiset). For $t \in \mathbb{N}$, let $t\cdot \mathcal{X}$ denote the multiset containing $t$ copies of each $X \in \mathcal{X}$. So if some $X$ appears in $\mathcal{X}$ with multiplicity $s$, then it appears in $t\cdot \mathcal{X}$ with multiplicity $st$. Similarly for multisets $\mathcal{X}_1$ and $\mathcal{X}_2$, for each $X$ that appears in $\mathcal{X}_1 \cup \mathcal{X}_2$ and $i \in \{1,2\}$ let $m_i(X)$ be the multiplicity of $X$ in $\mathcal{X}_i$. Define $\mathcal{X}_1 + \mathcal{X}_2$ to be the set containing each $X \in \mathcal{X}_1 \cup \mathcal{X}_2$ with multiplicity $m_1(X) + m_2(X)$ and containing no $Y$ such that $Y \notin \mathcal{X}_1 \cup \mathcal{X}_2$.
\end{defn}

\section{Proof of Theorem~\ref{th:packabletori}}\label{mainsec}
The goal in this section is to prove that when $k\ge 3$ is even, and $H$ is any induced subgraph of $C_k^m$ such that $|V(H)|$ divides $k^m$, there exists some $n_0$ such that $C_k^n$ admits a perfect induced $H$-packing for all $n \ge n_0$. Observe that it suffices to find just one $n_0$ where $C_k^{n_0}$ admits a perfect induced $H$-packing, as the vertices in $C_k^{n_0+1}$ can be covered by $k$ disjoint copies of $C_k^{n_0}$. 

In order to prove Theorem~\ref{th:packabletori} we apply a lovely theorem of Gruslys, Leader and Tomon (Theorem~\ref{th:cover}). The essence of the theorem is as follows: if, for some $n_0$, you can find two specific covers of $G^{n_0}$, then there exists a perfect $H$-packing of $G^n$ for some $n \ge n_0$. This is helpful because the two covers are often easier to find than the perfect $H$-packing itself. To state the theorem itself we require the following definition.

Given a collection $\mathcal{H}$ of copies of $H$ (where the same copy may be included multiple times), the \emph{weight} $\mathcal{H}(v)$ of a vertex $v$ with respect to $\mathcal{H}$, is the number of members of $\mathcal{H}$ in which it is contained (counted with multiplicity). 

Given a graph $G$ and a subgraph $H \subseteq G$, a collection $\mathcal{H}$ of copies of $H$ in $G$ (where the same copy may be included multiple times) is called an \emph{a-cover} (or an ($a$ mod $b$)-cover) if every vertex of $G$ has weight $a$ (or $a$ mod $b$) with respect to $\mathcal{H}$. Note that for an ($a$ mod $b$)-cover it is not necessary for each vertex of $G$ to be contained in the same number of copies of $H$, only the same number modulo $b$.

If $G$ admits a perfect $H$-packing, then clearly for any values of $a$ and $b$, there exists an $a$-cover or ($a$ mod $b$)-cover of $G$ with copies of $H$. The result of Gruslys, Leader and Tomon~\cite{imreagain} (inspired by a special case proved by Gruslys, Leader and Tan~\cite{imre1}) implies the converse statement.

\begin{thm}
\label{th:cover}
Let $G$ be a graph and let $H$ be a subgraph of $G$. If for some $r \ge 1$ there exists an $r$-cover and a (1 mod $r$)-cover of $G$ both consisting of induced copies of $H$, then there exists $n >0$ such that $G^n$ admits a perfect induced $H$-packing.
\end{thm}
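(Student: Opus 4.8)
The plan is to deduce this from the infinite result, Theorem~\ref{imre}, by encoding copies of $H$ in $G^n$ as isometric copies of a suitable finite tile $T\subseteq\mathbb Z^N$, and then transferring a partition of $\mathbb Z^n$ back to a finite packing of $G^n$. The intermediate observation is that an $r$-cover together with a $(1\bmod r)$-cover lets us build, for every integer $w\ge 1$, a $w$-cover of $G$ by induced copies of $H$: indeed, writing $q$ for a large multiple of $r$ and $w=qr+s$ with $0\le s<r$ hmm, more simply, from a $(1\bmod r)$-cover $\mathcal H_1$ and an $r$-cover $\mathcal H_r$ one gets for each vertex a weight that is $1\bmod r$; taking $s$ disjoint relabelled copies of $\mathcal H_1$ and correcting the resulting weights down to a constant by adding copies of $\mathcal H_r$, one obtains an exact $s$-cover for suitable $s$, and hence an exact $c$-cover for every sufficiently large $c$ in a fixed residue class. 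The real point is that $G^n$ for large $n$ behaves like a ``thickened'' copy of a grid, so an exact multiple cover in a bounded piece, plus room to spread out, can be disjointified.

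Concretely, I would proceed as follows. First, fix the $r$-cover $\mathcal R$ and the $(1\bmod r)$-cover $\mathcal S$ of $G$ (here $G=C_k^m$ or whatever the ambient graph is; in our application $G$ is already a power, but that is harmless). Each is a finite multiset of induced copies of $H$. Second, I would set up a correspondence between $V(G)^n$ and a box $B=\{0,\dots,\ell-1\}^{nd}\subseteq\mathbb Z^{nd}$ for an appropriate $\ell,d$, under which each copy of $H$ from $\mathcal R$ or $\mathcal S$, placed on any of the $n$ coordinate ``blocks'' of $G^n$, corresponds to a fixed finite pattern in $\mathbb Z^{nd}$. Third — and this is the heart of the Gruslys–Leader–Tan machinery — I would use the two covers to assemble a single finite tile $T$ (built out of several translated/disjoint copies of $H$ glued along an auxiliary coordinate) with the property that $\mathbb Z^{N}$ partitions into isometric copies of $T$ if and only if one can combine an $r$-cover and a $(1\bmod r)$-cover coherently; Theorem~\ref{imre} guarantees such a partition of $\mathbb Z^n$ for $n$ large, and then the partition of a large finite sub-box pulls back to a perfect induced $H$-packing of $G^{n_0}$ for some $n_0$. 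Finally, by the reduction noted in the paper (cover $C_k^{n_0+1}$ by $k$ disjoint copies of $C_k^{n_0}$), one $n_0$ suffices.

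The main obstacle is the third step: engineering the finite tile $T$ so that (a) it actually tiles some $\mathbb Z^N$ — which requires $|V(T)|$ to be a power of whatever it needs to be, and more delicately requires the local obstructions to vanish — and (b) a tiling of $\mathbb Z^N$ genuinely descends to a tiling of the finite, wrapped-around graph $G^{n_0}$ rather than of an infinite grid, so one must control boundary effects and the passage from $\mathbb Z$ to $\mathbb Z/k\mathbb Z$ on each coordinate. The role of having \emph{both} an $r$-cover and a $(1\bmod r)$-cover is exactly to kill these obstructions: the $(1\bmod r)$-cover provides a ``base'' layer hitting every vertex once up to the modulus, while the $r$-cover provides the flexibility to even out the residual weights to a constant, which is what a genuine partition demands. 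I would expect the bulk of the work to be bookkeeping: tracking multiplicities through the tile construction and verifying that the isometry condition in Theorem~\ref{imre} corresponds precisely to ``is an induced copy of $H$ in the appropriate block of $G^n$.''
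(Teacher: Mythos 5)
The paper does not prove Theorem~\ref{th:cover}: it is imported as a black box from Gruslys, Leader and Tan~\cite{imre1}, where it is the key combinatorial lemma. So your proposal must stand alone, and it has two serious gaps. First, your ``intermediate observation'' is wrong: you cannot turn the $(1\bmod r)$-cover into an exact constant cover by ``correcting the resulting weights down to a constant by adding copies of $\mathcal{H}_r$'', because adding copies of the $r$-cover raises the weight of \emph{every} vertex by the same amount and therefore cannot equalise weights that already differ from vertex to vertex. Equalising those weights is the entire content of the lemma, and it genuinely needs extra room: the Gruslys--Leader--Tan argument produces a partition not of $X$ itself but of a product $X\times[n]$, into ``bent'' copies $\{(t,f(t)): t\in T\}$ of the tiles. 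Indeed, read literally (with the conclusion about $G$ itself and arbitrary copies of $H$), the statement is false: a cubic graph with no perfect matching --- for instance three copies of $K_4$ with one edge subdivided, their subdivision vertices joined to a common new vertex --- admits a $3$-cover (all edges once) and a $(1 \bmod 3)$-cover by edges, yet has no perfect matching. What makes the applications in this paper legitimate is that the covers there consist of \emph{restricted} copies in a torus power, which can be bent further into fresh coordinates while remaining induced copies of $H$, so the product form of the lemma yields a packing of a higher power. Any correct proof must engage with this.

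Second, the route through Theorem~\ref{imre} is backwards: that theorem is \emph{deduced from} this covering lemma in~\cite{imre1}, not the other way round, so your plan is circular in substance. Even setting the circularity aside, you give no mechanism for encoding arbitrary induced copies of $H$ in an abstract graph $G$ as \emph{isometric} copies of a finite tile in $\mathbb{Z}^N$ --- isometries of $\mathbb{Z}^N$ are far too rigid a family to capture the copies appearing in the given covers --- nor for descending a tiling of the infinite grid to the finite wrapped graph $G^{n_0}$. Everything you defer to ``the heart of the Gruslys--Leader--Tan machinery'' and ``bookkeeping'' is precisely the proof that is missing. A self-contained argument would instead establish the product form of the lemma directly (an elementary induction on $r$: lay the $(1\bmod r)$-cover out disjointly in $X\times[n]$ so that every fibre $\{x\}\times[n]$ has the same number of uncovered points, then finish these off with bent copies coming from the $r$-cover), and separately verify that in the intended application the bent copies remain induced copies of $H$.
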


In light of Theorem~\ref{th:cover}, to prove Theorem~\ref{th:packabletori} it suffices to construct for some $n_0$, an $r$ cover and a (1 mod $r$)-cover of $C_k^{n_0}$ into induced copies of $H$. Theorem~\ref{th:cover} then guarantees a perfect induced $H$-packing of $C_k^{n_1}$, for some $n_1 \ge n_0$ and 
hence a perfect induced $H$-packing of $C_k^n$ for any $n \ge n_1$.

Following the approach of Gruslys~\cite{vyt}, we will use $r:= |V(H)|$. As shown in \cite{vyt}, it is easy to construct an $r$-cover from `translates' of $H$. 

\begin{defn}[Translate]\label{trans}
For $w = (w_1,\ldots, w_m) \in \{0,\ldots,k-1\}^m$ define $T_w: \{0,\ldots,k-1\}^m \rightarrow \{0,\ldots,k-1\}^m$ as follows:
$$T_w((v_1,\ldots,v_m)) := (v_1 + w_1, \ldots, v_m + w_m),$$
where co-ordinate addition is taken modulo $k$. For $H$ a subgraph of $V(C_k^m)$, define $T_w(H):= \{x + w: x \in H\}$ and say that $T_w(H)$ is a translate of $H$. We also use $T_w(H)$ to denote the induced subgraph of $H$ with vertex set $T_w(H)$. For $\mathcal{H}$ a collection of copies of $H$, define $T_w(\mathcal{H}) := \{T_w(H): H \in \mathcal{H}\}$. Similarly, say that $T_w(\mathcal{H})$ is a \emph{translate} of $\mathcal{H}$.
\end{defn}

Observe that for $H$ an induced subgraph of $C_k^m$, for any $w \in \{0,\ldots,k-1\}^m$, $T_w(H)$ is an induced copy of $H$ in $C_k^m$. Trivially, the set of all translates gives an $r$-cover.

\begin{claim}[Gruslys~\cite{vyt}]\label{lem:cover}
Let $H$ be an induced subgraph of $C_k^m$. Then there exists a collection $\mathcal{H}$ of translates of $H$ that is a $|V(H)|$-cover of $C_k^m$ into induced copies of $H$.
\end{claim}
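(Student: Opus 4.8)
The plan is a direct averaging argument over the full family of translates of $H$. First I would let $\mathcal{H}$ be the multiset $\{T_w(H) : w \in \{0,\ldots,k-1\}^m\}$ consisting of all $k^m$ translates of $H$. By the observation immediately preceding the claim, for every $w \in \{0,\ldots,k-1\}^m$ the set $T_w(H)$ is an induced copy of $H$ in $C_k^m$, so $\mathcal{H}$ is a collection of induced copies of $H$; it remains only to check the weights.

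Next I would compute the weight $\mathcal{H}(v)$ of an arbitrary vertex $v = (v_1,\ldots,v_m) \in \{0,\ldots,k-1\}^m$. By Definition~\ref{trans}, $v \in T_w(H)$ if and only if $v - w \in V(H)$ with coordinate subtraction taken modulo $k$, equivalently if and only if $w \in v - V(H) := \{\,v - x : x \in V(H)\,\}$. The map $x \mapsto v - x$ is a bijection of $\{0,\ldots,k-1\}^m$ onto itself, so $|v - V(H)| = |V(H)|$. Therefore exactly $|V(H)|$ of the translates $T_w(H)$ contain $v$, i.e.\ $\mathcal{H}(v) = |V(H)|$; since $v$ was arbitrary, $\mathcal{H}$ is a $|V(H)|$-cover of $C_k^m$ into induced copies of $H$.

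There is no genuine obstacle here: the argument rests only on the facts that coordinatewise addition modulo $k$ makes each $T_w$ a bijection on $V(C_k^m)$, and that it preserves the adjacency relation of the torus (which is precisely why each $T_w(H)$ is again an induced copy of $H$). Both are immediate from the definitions in Section~1.1, so the proof is a one-line counting argument once the family of all translates is taken as $\mathcal{H}$.
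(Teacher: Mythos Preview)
Your argument is correct and is exactly the approach the paper takes: it simply remarks that the set of all translates trivially gives an $r$-cover with $r=|V(H)|$. Your write-up just spells out the one-line counting that underlies this.
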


Thus to complete the proof of Theorem~\ref{th:packabletori} it suffices to find a collection of copies of $H$ that is a (1 mod $r$)-cover of a sufficiently high dimensional torus. This is where the main difficulty of the proof lies and will be addressed in the next subsection.

\subsection{Finding a (1 mod $r$)- cover}

To find a (1 mod $r$)-cover, we actually prove a result for more general $H$. Say that a subgraph $\emptyset \not= H \subseteq C_k^m$ \emph{does not wrap} if either $m=0$ or if there exists a translate $w \in \{0,\ldots, k-1\}^m$ such that $T_w(V(H)) \subseteq \{0, \ldots,k-2\}^m$. For $H \subseteq C_k^m$ that does not wrap, fix $T^*(H)$ to be a translate of $H$ such that $H \subseteq \{0,\ldots,k-2\}^m$ (note that there may be many choices for $T^*(H)$. We will see that we can find a collection of copies of $H$ that is a (1 mod $r$)-cover of $C_k^n$ for any $H \subseteq C_k^m$ that does not wrap in $C_k^m$. (So we don't need the condition that $|V(H)|$ divides $|V(C_k^n)|$ here.) 

Our proof will proceed by induction. However, in order to make the induction go through, we require some control over the types of copy that are used in the inductive hypothesis. Let us define the required concepts to introduce these special copies.  

 One such type is the translates of $H$, defined in Definition~\ref{trans}. The other type we care about is a \emph{bend}.  

\begin{defn}
[Bend]
For $i \in [m]$, $j \in \{1,\ldots,k-1\}$, $n > m$ and $s \in \{1,\ldots,n-m\}$ define $S^{n,s}_{i,j}: \{0,\ldots,k-1\}^m \rightarrow \{0,\ldots,k-1\}^n$ as follows:

$$S^{n,s}_{i,j}((v_1,\ldots,v_m)) = \begin{cases}
(v_1,\ldots,v_m)\times 0^{n-m} & \text{ if } v_i < j, \\
(v_1,\ldots,v_{i-1},j-1,v_{i+1},\ldots,v_m) \times t \cdot e_s^{n-m} & \text{ if }v_i = j - 1 + t. 
\end{cases}
$$
(Recall the definition of $e_s^{n-m}$ from Definition~\ref{basis}.)

For $H$ a subgraph of $C_k^m$, define the $(i,j,s)$-\emph{bend of $H$ in $C_{k}^n$} to be the set 
$$S^{n,s}_{i,j}(H):= \{S^{n,s}_{i,j}(x): x \in H\}.$$
For $\mathcal{H}$ a collection of copies of $H$, define $S^{n,s}_{i,j}(\mathcal{H}):= \{S^{n,s}_{i,j}(H): H \in \mathcal{H}\}$ and say that $S^{n,s}_{i,j}(\mathcal{H})$ is a \emph{bend} of $\mathcal{H}$. Say that a bend is a \emph{good bend} if it is applied to a graph $H \subseteq \{0,\ldots,k-2\}^m$.
\end{defn}

The $(i,j,s)$-bend of $H$ in $C_{k}^n$ can be thought of as `bending' $H$ at the $j$th level of the $i$th co-ordinate direction into the $(m+s)$th co-ordinate direction. Observe that for $H \subseteq \{0,\ldots, k-2\}^m$, the $(i,j,s)$-bend of $H$ in $C_{k}^n$ is an induced copy of $H$ in $C_k^n$. This is because all coordinates except the $i$th and $(m+s)$th are unchanged under $S^{n,s}_{i,j}$ and any vertices differing by 1 in the $i$th coordinate will differ by 1 in either the $i$th or $(m+s)$th coordinate under $S^{n,s}_{i,j}$. See Figure~\ref{shiftpic} for two examples of (good) bends. 

\begin{figure}[htbp]
\centering
\includegraphics[width=1\textwidth]{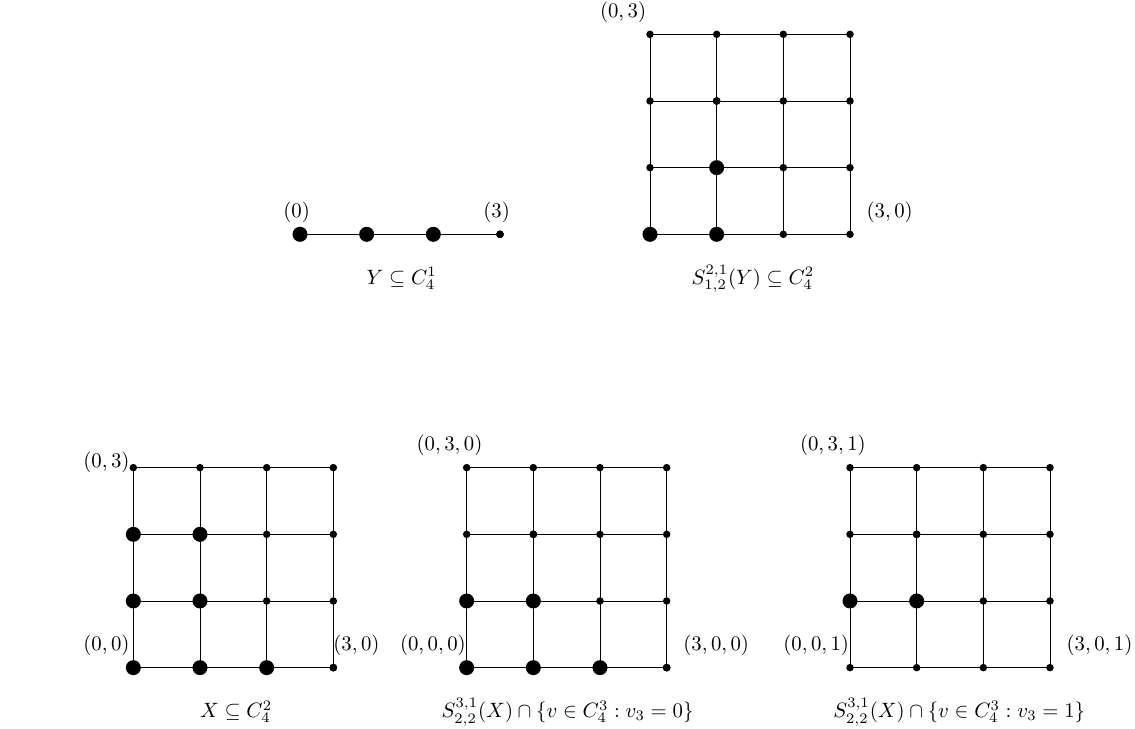}
\caption{An example of the $(1,2,1)$-bend in $C_4^2$ of a set $Y \subseteq C_4^1$ and an example of the $(2,2,1)$-bend in $C_4^3$ of a set $X \subseteq C_4^2$. By definition of a bend, $S_{2,2}^{3,1}(X) \cap \{v \in C_4^3: v_3 = 2\}$ and $S_{2,2}^{3,1}(X) \cap \{v \in C_4^3: v_3 = 3\}$ are empty. }
\label{shiftpic}
\end{figure}


Note that when $H$ does not wrap, then applying any sequence of translates and good bends results in an induced copy of $H$ that does not wrap. For $H \subseteq C_k^m$, say that $H' \subseteq C_k^{n}$ is a \emph{restricted} copy of $H$ if it can be obtained from $H$ by applying a sequence of translates and good bends. Notice that the restricted copy of $H$ is an induced copy of $H$.
The remaining tool in the proof of Theorem~\ref{th:packabletori} is the following lemma.

\begin{lem}\label{lem:evenpack}
Fix $r >0$. Let $H$ be a subgraph of $C_k^m$ that does not wrap. Then, for some $m' \ge m$, there exists a collection $\mathcal{H}$ of restricted copies of $H$ that is a (1 mod $r$)-cover of $C_k^{m'}$.
\end{lem}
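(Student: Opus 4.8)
The plan is to induct on $m$. When $m=0$, $H$ is a single vertex of $C_k^0$ (a point), and we must cover $C_k^{m'}$ so that every vertex has weight $1$ mod $r$; taking $m' = 0$, the single copy of $H$ that is $C_k^0$ already does this, or more robustly we may take $m'$ large and use a single translate-cover reduced mod $r$ after noting Claim~\ref{lem:cover} gives an $|V(H)|=1$-cover, hence a $1$-cover. So the base case is immediate. For the inductive step, suppose $H \subseteq C_k^m$ does not wrap, so $V(H) \subseteq \{0,k-2\}^m$, and consider the $m$-th coordinate direction. Slice $H$ into its levels $H_0, H_1, \ldots, H_{k-2}$ according to the value of the $m$-th coordinate (levels $k-1$ is empty since $H$ does not wrap — wait, more carefully, $V(H)\subseteq\{0,\dots,k-2\}^m$ means coordinate $m-1$, using the paper's indexing, only takes values in $\{0,\dots,k-2\}$). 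Each nonempty level, viewed as a subset of $C_k^{m-1}$, does not wrap, so by the induction hypothesis each admits a $(1 \bmod r)$-cover by restricted copies inside some high-dimensional torus $C_k^{m''}$.

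The heart of the argument is to reassemble these lower-dimensional covers into a $(1 \bmod r)$-cover of a torus of dimension $m' = m + (\text{extra dimensions})$ using translates and bends of $H$ itself, rather than of its slices. The key mechanism is the bend operation: a bend $S^{n,s}_{i,j}(H)$ agrees with $H\times 0^{n-m}$ on the part of $H$ with $i$-th coordinate $<j$, and ``peels off'' the upper levels into a fresh coordinate direction $s$, where they occupy distinct slices $t\cdot e_s^{n-m}$ for $t = 1, 2, \ldots$. By choosing translates in the new directions and summing (in the multiset sense of Definition~\ref{multi}) an appropriate family of bends and translates, one arranges that the contributions of the ``upper'' levels cancel modulo $r$ wherever they are unwanted and add up correctly wherever they are wanted, reducing the problem of covering $C_k^n$ to the problem of simultaneously covering each level-slab — which is exactly an $(m-1)$-dimensional instance. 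I would set this up so that: first, the bends let me treat the top level $H_{k-2}$ and the rest $H \setminus (\text{top level})$ semi-independently; second, an inductive/telescoping argument over the $k-1$ levels handles all levels; third, the leftover discrepancies (which are themselves restricted copies of lower-dimensional slices, multiplied up) get absorbed by invoking the induction hypothesis on each slice and then ``lifting'' those covers back up via the product structure $\mathcal{X}\times y$, using that a cover of $C_k^{m''}$ gives, for each fixed value of the extra coordinates, a cover of the corresponding copy of $C_k^{m''}$ inside $C_k^{m''+1}$.

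The main obstacle, and where most of the work will go, is the bookkeeping that shows the multiset sum of translates and bends has weight exactly $1 \bmod r$ everywhere — in particular controlling the ``spillover'' slices created by the bends (the slices $\{v_s = t\}$ for $t\ge 1$ in the figure), making sure that copies bent into the new direction $s$ do not wrap and remain induced (this is where ``does not wrap'' and the evenness/structure of $k$ matter), and ensuring the dimension $m'$ can be taken uniformly. A secondary subtlety is that the induction hypothesis only guarantees a cover in \emph{some} dimension $m''$ depending on the slice, so I would first pad all the slice-covers up to a common dimension (using that $C_k^{d+1}$ is $k$ disjoint copies of $C_k^d$, so a $(1\bmod r)$-cover lifts to one in higher dimension) before combining them. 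Once the combinatorial identity ``$\sum \text{bends} + \sum \text{translates} \equiv 1 \pmod r$'' is established at the level of one coordinate direction, iterating it over the $m$ directions and feeding in the lifted slice-covers completes the induction.
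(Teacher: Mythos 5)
Your plan has the right outer shell (induction on $m$, slicing by one coordinate, bends into fresh directions, multiset cancellation mod $r$), but it leaves unaddressed the one step where the entire difficulty of the lemma lives: how to pass from $(1 \bmod r)$-covers of the individual slices $H_j$ --- which are collections of copies of the \emph{slices}, not of $H$ --- to a collection of copies of \emph{$H$ itself} with controlled weights. If, as you propose, you apply the induction hypothesis to every level and then extend each slice-copy to a full copy of $H$, each extension deposits uncontrolled weight on all the other levels, and summing over all levels produces a system of interfering contributions that you cannot tune independently. The paper's proof does something more specific: it applies the induction hypothesis \emph{only to the topmost nonempty slice} $H_\tau$, observes that each restricted copy in that cover has the form $\phi_Y(\widetilde H_\tau)$ for a composition $\phi_Y$ of translates and bends acting as the identity on the first coordinate, and then applies the same $\phi_Y$ to all of $\widetilde H$, so that slices map into slices (properties (P1)--(P2)). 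The resulting collection $\mathcal X$ covers level $\tau$ exactly $1 \bmod r$ times and the other levels by unknown constants; the unknown part is then killed by taking $\mathcal X$ together with its $r-1$ bends $S^{n_0,d}_{1,\tau}(\mathcal X)$. These $r$ collections agree on the levels below $\tau$ (so those weights become $0 \bmod r$) while placing the level-$\tau$ slab into $r$ pairwise disjoint positions, each of weight $1 \bmod r$. Your plan gestures at ``cancellation mod $r$'' but never identifies this multiply-the-uncontrolled-part-by-$r$ mechanism, which is the crux.

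A second missing piece: even after all of this one only obtains a \emph{layered} cover, with weights $(1,0,\dots,0,-1)$ on the $k$ levels of the first coordinate, not a $(1 \bmod r)$-cover. Converting the former into the latter is a separate lemma in the paper (Lemma~\ref{lem:layered}), proved via a telescoping sum of translates yielding a $(1,\dots,1,1-k)$-layered cover, followed by an iterated descent (Claim~\ref{claim}) from $k^m$-covers to $k^{m-1}$-covers that terminates at $k^0=1$; this descent is exactly how the argument avoids any divisibility hypothesis relating $r$ and $k$. Your ``telescoping over the $k-1$ levels'' points in this direction, but this step is not routine bookkeeping and needs to be supplied explicitly.
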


We will first complete the proof of Theorem~\ref{th:packabletori} using Lemma~\ref{lem:evenpack} before proving the lemma itself.

\begin{proof}[Proof of Theorem~\ref{th:packabletori}]
For $k\ge 3$ even, let $H$ be a subgraph of $C_{k}^m$ such that $|V(H)|$ divides $k^m$. As $k$ is even, there exists some $n = n(k)$ such that $C_k$ is an induced subgraph of $Q_{n}$. It follows that $C_k^m$ is an induced subgraph of $Q_{nm}$ (e.g.~if $V = \{v_1,\ldots,v_k\}$ induces a $C_k$ in $Q_n$, then $\{w_1 \times w_2 \times \cdots \times w_m: w_i \in V\}$ induces a $C_k^m$ in $Q_{nm}$). As $k \ge 3$, $Q_{nm}$ is an induced subgraph of $C_k^{nm}$ that does not wrap, so there exists an induced subgraph $\widetilde{H}$ isomorphic to $H$ in $C_k^{nm}$ that does not wrap. Therefore, we may apply Lemma~\ref{lem:evenpack} along with Claim~\ref{lem:cover} to $\widetilde{H}$ to find an $r$-cover and (1 mod $r$)-cover of $C_k^{n_0}$ (for some $n_0\ge nm$), where both covers consist of restricted copies of $H$. Theorem~\ref{th:cover} then gives the required perfect induced $H$-packing of $C_k^{n_1}$, for some $n_1 \ge n_0$.
\end{proof}

Observe that our proof requires $k \ge 3$ to find some $\widetilde{H}$ isomorphic to $H$ in $C_k^{m'}$ such that $\widetilde{H}$ does not wrap. 

For integers $k >2$, $m\ge 1$ and $j \in \{0,\ldots,k-1\}$, define $C_k^m(j):= \{v \in C_k^m: v_1 = j\}$. Observe that $C_k^m(j)$ is isomorphic to $C_k^{m-1}$. Throughout the proof of Lemma~\ref{lem:evenpack}, it will be convenient to consider covers of $C_k^m$ into restricted copies of $H$ such that, for each $j \in \{0,\ldots,k-1\}$ there exists  $a_j$ such that every vertex of $C_k^{m}(j)$ is contained in ($a_j$ mod $r$) copies of $H$. This notion is formalised in the next definition. 

\begin{defn}[Layered cover]\label{def:LayeredPartition}
For $H \subseteq C_k^m$, define a \emph{$(a_1,\ldots,a_k)$-layered cover} of $C_k^m$ to be a collection of copies of $H$ such that for each $j \in \{0,\ldots,k-1\}$, every vertex of $C_k^m(j)$ is contained in ($a_j$ mod $|V(H)|$) copies of $H$. 
\end{defn}

The following lemma shows that to prove Lemma~\ref{lem:evenpack}, it suffices to construct a particular $(1,0,\ldots,0,-1)$-layered cover of $C_k^m$ for some $m$.

\begin{lem}\label{lem:layered}
Let $H \subseteq C_k^m$ and let $\mathcal{Z}$ be a collection of restricted copies of $H$ that is a $(1,0,\ldots,0,-1)$-layered cover of $C_k^m$. Then, for some $n \ge m$, there exists a collection $\mathcal{H}$ of restricted copies of $H$ that is a (1 mod $r$)-cover of $C_k^{n}$.
\end{lem}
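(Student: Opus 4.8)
The plan is to start from the $(1,0,\ldots,0,-1)$-layered cover $\mathcal{Z}$ of $C_k^m$ and amplify it, by taking products with a high-dimensional torus, into a genuine (1 mod $r$)-cover of $C_k^n$ for some large $n$. The key observation is that a $(1,0,\ldots,0,-1)$-layered cover behaves, on each slice $C_k^m(j)$, like a constant multiple of the all-ones function: slice $0$ gets weight $1$ mod $r$, slice $k-1$ gets weight $-1$ mod $r$, and all intermediate slices get weight $0$ mod $r$. Thinking of the weight function as an element of $(\mathbb{Z}/r\mathbb{Z})$-valued functions on the first coordinate, $\mathcal{Z}$ realises the ``discrete derivative'' $\mathbf{1}_{\{v_1=0\}} - \mathbf{1}_{\{v_1=k-1\}}$ uniformly on each slice. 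So the first step is to make this algebraic picture precise: encode what weight functions (as functions of the first coordinate, with the rest of the torus contributing only a uniform scaling) are realisable by collections of restricted copies of $H$, and note that this set is closed under addition (disjoint union of collections) and under translation in the first coordinate (since translates of restricted copies are restricted copies, by the remark that translates and bends preserve the ``does not wrap'' property and the closure under these operations).

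The second step is to use translation-closure to telescope. Taking $\mathcal{Z}$ together with its translates $T_{(\ell,0,\ldots,0)}(\mathcal{Z})$ for $\ell = 0, 1, \ldots, k-1$ and summing them (in the multiset sense of Definition~\ref{multi}), the contributions telescope: each slice is hit once with weight $+1$ and once with weight $-1$, so the sum is a $0$-cover mod $r$; more usefully, partial sums $\sum_{\ell=0}^{j-1} T_{(\ell,0,\ldots,0)}(\mathcal{Z})$ give a collection whose weight on slice $C_k^m(j')$ is $1$ mod $r$ for $j' < j$ and $0$ mod $r$ for $j' \ge j$ — i.e. we can realise the indicator of an initial segment of slices, uniformly, as a weight function, for every initial segment. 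In particular, with $j = k$ we would get weight $1$ mod $r$ on slices $0,\ldots,k-2$ and $0$ on slice $k-1$; this is not yet a (1 mod $r$)-cover, so we need one more idea to fix the last slice.

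The third step handles the defect on the final slice by passing to a higher power. The cleanest route is induction / recursion on dimension: we already have, for the first coordinate, a collection realising ``$1$ mod $r$ everywhere except one slice, $0$ there''. Now take the Cartesian product construction — place copies of the above collection at every fixed value of a fresh batch of $s$ coordinates, i.e. work in $C_k^{m+s}$ — and exploit that in $C_k^{m+s}$ the roles of coordinates are symmetric, so we also obtain a collection that is $1$ mod $r$ everywhere except on a single slice in the $(m+1)$st coordinate. Summing finitely many such collections, one for each of $k$ suitably chosen coordinate directions, the exceptional slices can be arranged to overlap in a controlled way; alternatively, iterate: having a cover of $C_k^{n_0}$ that is $1$ mod $r$ on all but a $\tfrac{1}{k}$-fraction of vertices and $0$ on the rest, apply the same trick to that residual torus slice (which is itself a $C_k^{n_0-1}$, so restricted copies of $H$ in it remain restricted copies in $C_k^{n_0}$) to cut the defect down, and after finitely many rounds the defect set becomes empty. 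The bookkeeping has to ensure the extra weight we pile on the ``bad'' region never lands (mod $r$) on the already-good region — and this is exactly where the freedom to choose which initial segment of slices to use, and to scale everything by multiples of $r$, gives enough room.

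The main obstacle I expect is precisely this last step: combining the ``all but one slice'' covers across different coordinate directions (or across recursion levels) without destroying the $1$ mod $r$ property on the part of the torus already correctly covered. One must track weights modulo $r$ carefully and use that adding any multiple of $r$ to a vertex's weight is harmless, so the strategy is to arrange that on the good region the accumulated extra weight is $\equiv 0 \pmod r$ while on the bad region it bootstraps the weight from $0$ up to $1$. The telescoping in Step 2 is routine, and Step 1 is just setting up notation; the genuine content — and where I'd spend the effort — is showing that finitely many applications of the product/bend construction suffice to clear the defect, which is where $k > 2$ and the structure of restricted copies (translates and bends) are actually used.
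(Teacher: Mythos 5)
Your overall strategy --- concentrate the discrepancy onto a single slice by summing translates of $\mathcal{Z}$ in the first coordinate, then clear the residual defect by recursing into extra dimensions --- is the same shape as the paper's argument, but there are two genuine gaps, one of which is a concrete computational error. In Step 2, the unweighted partial sum $\sum_{\ell=0}^{j-1} T_{\ell\cdot e}(\mathcal{Z})$ (with $e=e_1^m$) does \emph{not} have weight $1$ on slices $0,\dots,j-1$ and $0$ elsewhere: since $T_{\ell\cdot e}(\mathcal{Z})$ has slice-profile $\delta_\ell-\delta_{\ell-1}$, the sum telescopes to $\delta_{j-1}-\delta_{k-1}$, i.e.\ weight $1$ on slice $j-1$, weight $-1$ on slice $k-1$, and $0$ elsewhere. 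In fact no multiset of translates of $\mathcal{Z}$ can realise your claimed initial-segment profile, because every such sum has total weight $\equiv 0 \pmod r$ while $\mathbf{1}_{\{0,\dots,j-1\}}$ has total weight $j$. The correct move is the \emph{weighted} sum $\sum_{a=0}^{k-2}(k-1-a)\cdot T_{a\cdot e}(\mathcal{Z})$, which gives weight $1$ on slices $0,\dots,k-2$ but weight $1-k$ (not $0$) on slice $k-1$; that unavoidable $-k$ deficit is exactly what the rest of the proof must absorb.

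The second gap is that Step 3 --- which you yourself flag as ``where I'd spend the effort'' --- is precisely the heart of the lemma, and the mechanism that makes the recursion terminate is missing. If you try to repair the deficit of $k$ on the slice $\{v_1=k-1\}$ by running the same layered construction (scaled by $k$) inside that slice in a fresh coordinate, the deficit does not shrink: it becomes $k^2$ on a codimension-two subtorus, then $k^3$, and so on. The defect \emph{set} shrinks but never empties; the process terminates only because the defect \emph{value} eventually becomes $0$ mod $r$, which requires $k^j\equiv 0\pmod r$ for some $j$ --- true here because $r=|V(H)|$ divides $k^m$, but false for general $r$ coprime to $k$, so ``after finitely many rounds the defect set becomes empty'' cannot be justified without invoking this arithmetic. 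The paper runs the same arithmetic in the opposite direction: the set of all translates of $H$ is a uniform $|V(H)|$-cover of $C_k^m$, hence scales to a uniform $(k^m \bmod r)$-cover; placing a uniform $k^s$-cover on the top slice of $C_k^{n+1}$ and adding $k^{s-1}$ copies of the $(1,\dots,1,1-k)$-layered cover yields a uniform $k^{s-1}$-cover, since $k^s+k^{s-1}(1-k)=k^{s-1}$; iterating $m$ times descends from $k^m$ to $k^0=1$. Some version of this identity, together with the divisibility $|V(H)|\mid k^m$, has to appear explicitly for the proof to close.
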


\begin{proof}
Define $t:= k^m/|V(H)|$. Let $\mathcal{X}:= \{T_v(V(H)): v \in \{0,\ldots,k-1\}^m\}$ and let $\mathcal{W}^* :=  t \cdot \mathcal{X}$. By construction, every vertex $v \in C_k^m$ satisfies $\mathcal{X}(v) = |V(H)|$ and so also satisfies $\mathcal{W}^*(v) = k^m$. 

We will now construct a family of layered covers that will will add to $\mathcal{W}^*$ to `spread out' its weight into a higher dimensional torus. 

\begin{claim}\label{cl:layer}
For any $j \ge m$, there exists a collection $\mathcal{Z}^j$ of restricted copies of $H$ that is a $(1,\ldots,1,1-k)$-layered cover of $C_k^{j}$. 
\end{claim}
\begin{proof}
The proof will proceed by induction on $j$. First, consider the base case $j=m$. Let $e:= e_1^m$ and define:
$$\mathcal{Z}^m:= \sum_{a \in \{0,\ldots,k-2\}}T_{a \cdot e }((k-1-a)\cdot \mathcal{Z}).$$
For each $a \in \{0,\ldots,k-2\}$, every vertex of $C_k^m(a)$ is contained in ($k-1-a$ mod $r$) members of $T_{a \cdot e}((k-1-a)\cdot \mathcal{Z})$ and is contained in ($-(k-1-(a+1))$ mod $r$) members of $T_{(a+1) \cdot e}((k-1-(a+1))\cdot \mathcal{Z})$. Each vertex of $C_k^m(k-1)$ is contained in ($-(k-1)$ mod $r$) members of $T_0((k-1)\cdot \mathcal{Z}) = (k-1)\cdot \mathcal{Z}$. Thus $\mathcal{Z}^m$ is a $(1,\ldots,1, 1-k)$-layered cover of $C_k^m$, as required.

Now for the inductive step. For $j \ge m$ let $\mathcal{Z}^j \subseteq \{0,\ldots,k-1\}^{j}$ be a collection of restricted copies of $H$ that is a $(1,\ldots,1,1-k)$-layered cover of $C_k^j$. Define:
 $$\mathcal{Z}^{j+1}:= \sum_{a=0}^{k-1} \mathcal{Z}^j \times a.$$
We can think of $\mathcal{Z}^{j+1}$ as being the sum of $k$ `translates' of $\mathcal{Z}^{j}$, one for for each possible value of the $(j+1)$st coordinate. So $\mathcal{Z}^{j+1}$ is a $(1,\ldots,1,1-k)$-layered cover of $C_k^{j+1}$, as required.
\end{proof}

We now use the families $\mathcal{Z}^j$ given by Claim~\ref{cl:layer} to prove the following.
\begin{claim}\label{claim}
For $0 \le i < m$, let $\mathcal{W}$ be a collection of restricted copies of $H$ in $C_k^{m+i}$ that satisfies $\mathcal{W}(v) \equiv k^{m-i} \text{ mod } r$ for all $v \in C_k^{m+i}$. Then there exists a collection $\mathcal{W}'$ of restricted copies of $H$ in $C_k^{m+i + 1}$ such that $\mathcal{W}'(v) \equiv k^{m-i-1}$ mod $r$ for all $v \in C_k^{m+i + 1}$. 
\end{claim}
Repeatedly applying Claim~\ref{claim} to $\mathcal{W}^*$ gives a family satisfying the hypothesis of the lemma. Thus it remains to prove Claim~\ref{claim}.

\begin{proof}[Proof of Claim~\ref{claim}]
$C_k^{m+i + 1}(k-1)$ is a copy of $C_k^{m+i}$ and so we can use $\mathcal{W}$ to construct a collection $\mathcal{W}_0$ of restricted copies of $H$ in $C_k^{m+i +1}(k-1)$ that satisfies:
$$\mathcal{W}_0(v)= 
\begin{cases}
k^{m-i} \text{ mod } r& v_1 = k-1\\
0 \text{ mod } r& \text{ otherwise,}
\end{cases} $$
for all $v \in C_k^{m+i +1}$. To do this, just take a copy of $\mathcal{W}$ (all of whose copies of $H$ lie in $C_k^{m+i}$) and put it in $C_k^{m + i + 1}(k-1)$.

Let $\mathcal{Z}^{m+ i + 1}$ be the collection of restricted copies of $H$ that is a $(1,\ldots,1,1-k)$-layered cover of $C_k^{m+i +1}$ (as constructed in Claim~\ref{cl:layer}).
Then $\mathcal{W}' := \mathcal{W}_0 + k^{m-i - 1}\cdot \mathcal{Z}^{m+ i + 1}$ is a collection of restricted copies of $H$ that satisfies $\mathcal{W}'(v) \equiv k^{m-i-1} \text{ mod } r$ for all $v \in C_k^{m+i +1}$. 
\end{proof}
This completes the proof of the lemma.
\end{proof}

We will now use Lemma~\ref{lem:layered} to prove Lemma~\ref{lem:evenpack}. It may be helpful to recall Definition~\ref{multi}.

\begin{proof}[Proof of Lemma~\ref{lem:evenpack}]

We will prove the statement by induction on $m$. Consider the base case $m=0$. In this case $H$ consists of a single vertex and for any $n \ge 0$ there exists a collection of restricted copies of $H$ that is a (1 mod $r$)-cover of $C_k^n$. 

So now fix $m > 0$, and suppose the lemma holds for all integers less than $m$. Let $H$ be an induced subgraph of $C_k^m$ that does not wrap. We may assume, without loss of generality, that $H \subseteq \{0,\ldots,k-2\}^m$. For $j \in \{0,\ldots,k-1\}$ define $H_j := H \cap C_k^m(j)$.  We may assume that $\{j : H_j \not=\emptyset\}| \ge 2$ (else we can consider $H$ as a subgraph of $C_k^{m-1}$ that does not wrap and are done by induction). Let $\tau$ be maximal such that $C_k^{m}(\tau) \cap H$ is non-empty.

By definition, 
$$H_{\tau} \subseteq \{\tau\} \times \{0,\ldots,k-1\}^{m-1} \cong C_k^{m-1}.$$

Define:
$$H_{\tau}':= \{(v_2,\ldots,v_m): (v_1,\ldots,v_m) \in H_{\tau}\} \subseteq C_k^{m-1}.$$
By definition of `does not wrap', since $H$ does not wrap in $C_k^m$, $H_{\tau}'$ does not wrap in $C_k^{m-1}$ ($H_{\tau}'$ is defined in such a way that it has one dimension fewer than $H$). Therefore, by the inductive hypothesis, there exists some $m_0 \ge m -1$ and a collection $\mathcal{H}_{\tau}'$ of restricted copies of $H_{\tau}'$ in $C_k^{m_0}$ that is a (1 mod $r$)- cover of $C_k^{m_0}$. 

Define:
$$\mathcal{H}_{\tau}:= \{ \{\tau\} \times X: X \in \mathcal{H}_{\tau}'\}.$$ 
Thus $\mathcal{H}_{\tau}$ is a collection of restricted copies of $H_{\tau}$ contained within $C_k^{m_0+1}(\tau)$ that is a (1 mod $r$)- cover of $C_k^{m_0+1}(\tau) = \{\tau\} \times C_k^{m_0} \subseteq C_k^{m_0 + 1}.$ We wish to extend this to a collection of restricted copies of $H$ that is a (1 mod $r$)-cover of $C_k^{m_0+1}(\tau)$.

By definition, every $Y \in \mathcal{H}_{\tau}$ can be obtained from ${H}_{\tau}$ by a sequence of translates and good bends. More precisely, we have $Y = \phi_Y(H_{\tau})$, where $\phi_Y = \phi_1 \ldots \phi_t$ such that each $\phi_i$ is either a translate or a good bend (i.e.~every coordinate of $\phi_{i+1}\cdots \phi_{t}(H_{\tau})$ is in $\{0,\ldots,k-2\}$). In addition, each $\phi_i$ has the form $id \times \phi_i'$, for $\phi_i'$ some translate or bend of $C_k^{m_0}$ (i.e.~$\phi_i$ does not affect the first dimension). By construction, for each $Y$, $\phi_Y: \{0,\ldots,k-1\}^{m_0 + 1} \rightarrow \{0,\ldots,k-1\}^{m_0 +1}$ acts as the identity on the first co-ordinate. 

We now apply some sequence of good bends and translates to $H$ instead of just $H_{\tau}$. As $H$ does not wrap in $C_k^{m}$, translates and good bends of $H$ induce a subgraph isomorphic to $H$. Thus we have that, for any $Y \in \mathcal{H}_{\tau}$, the map $\phi_Y$ satisfies the following properties: 
\begin{itemize}
\item[(P1)] $\phi_Y(H)$ is isomorphic to $H$,
\item[(P2)] $\phi_Y(H_j) \subseteq C_k^{m_0 + 1}(j)$, for all $j \in \{0,\ldots,k-1\}$.
\end{itemize}

Therefore, 
$$\mathcal{X}:= \left\{\phi_Y\left(H\right): Y \in \mathcal{H}_{\tau}\right\}$$
is a collection of restricted copies of $H$ in $C_k^{m_0+1}$ such that every vertex of $C_k^{m_0 +1}(\tau)$ is contained in (1 mod $r$) members of $\mathcal{X}$.

Let $n_0:= m_0 + r$. We will now define a new collection created from bends of $\mathcal{X} \times 0^{r-1}$. For $d \in [r-1]$, define  $\mathcal{X}_d:= S^{n_0,d}_{1,\tau}(\mathcal{X})$. Define: $$\mathcal{Y}:= \mathcal{X}\times 0^{r-1} +  \sum_{d \in [r-1]}\mathcal{X}_d.$$
So $\mathcal{Y}$ is created from $r$ copies of $\mathcal{X}$ in such a way that these copies agree in the first $\tau - 1$ levels, and the final level of each copy is bent so that it sticks out in a different direction. It is easy to check that every vertex $v \in C_k^{n_0}$ satisfies: 
$$\mathcal{Y}(v) =  \begin{cases}
  1 \text{ mod } r & \text{if } v_1 = \tau \text{ and }(v_{m_0 +2},\ldots,v_{n_0}) = 0^{r-1},\\
  1 \text{ mod } r & \text{if } v_1 = \tau - 1  \text{ and } (v_{m_0 + 2},\ldots, v_{n_0}) = e^{r-1}_d \text{ for some } d\in [r-1],\\    
  0 \text{ mod } r &\text{ otherwise.} \end{cases}
$$

Define
$$W := \{0^{m_0 +1}\times v: v \in \{0,\ldots,k-1\}^{r-1}\}.$$
Also let $e:= e_1^{n_0}$ and define
$$\mathcal{Z} := \bigcup_{v \in W} T_{v - \tau\cdot e}(\mathcal{Y}).$$
By construction, every vertex $v \in C_k^{n_0}$ satisfies:
$$\mathcal{Z}(v) =  \begin{cases}
  1 \text{ mod } r & \text{if }v_{1} = 0,\\
  -1 \text{ mod } r & \text{if } v_{1} = k -1, \\    
  0 &\text{ otherwise.} \end{cases}
$$
Thus $\mathcal{Z}$ is a collection of restricted copies of $H$ that is a $(1,0,\ldots,0,-1)$-layered cover of $C_k^{n_0}$. Therefore, by Lemma~\ref{lem:layered}, there exists some $N \ge n_0$ and a collection $\mathcal{H}$ of restricted copies of $H$ that is a (1 mod $r$)-cover of $C_k^{N}$.
\end{proof}

We remark that it was important for the inductive step that we used a collection of restricted copies of $H_{\tau}$ to cover $C_k^{m_0}$. If we were allowed any copies of $H_{\tau}$, then we may not have been able to extend our cover to a well-behaved cover of $C_k^{m_0 + 1}(\tau)$. Suppose we had used a copy of $H_{\tau}$ that came from an `upside down' copy of $H$. Then when we extended this to a copy of $H$, the copy of $H$ may intersect $C_k^{m_0 +1}(\tau + 1), \ldots, C_k^{m_0 +1}(k-1)$. In this case we would not then be able to apply a bend to the resulting cover, as this particular copy of $H$ would have a co-ordinate equal to $k-1$. 

\section{Proof of Theorem~\ref{th:unpackabletori}}\label{tor2}
In this section we construct, for any $k$ a product of two odd coprime integers, a graph $H$ satisfying the base conditions for which there exists no $H$-packing of $C_{k}^n$ for any $n$. 

\begin{proof}[Proof of Theorem~\ref{th:unpackabletori}]
Let $k=a\cdot b$, where $a \not= 1$ and $b \not= 1$ are odd coprime integers. We will construct $H$ to be an induced subgraph of $C_{ab}^2$ such that $|V(H)| = a^m$ for some $m$ (and hence $H$ is an induced subgraph of $C_{ab}^m$ with $|V(H)|$ dividing $|V(C_{ab}^m)|$). 

Without loss of generality, assume that $a < b$. Choose an integer $t$  such that 
$2b-1 \le a^t < b^2$. (If $a \le b/2$ then such an integer exists, as $b^2/(2b-1) > a$. If $a > b/2$ and no such integer exists then $b^2/4 < a^2 < 2b-1.$ When $b \ge 8$, this is a contradiction. When $b \le 7$, it is not difficult to check that $a^2 \ge 2b-1$ for any valid choice of $a$, a contradiction.) 

For a vertex $v$ in $C_{ab}^2$, define the \emph{box} $B_v$ to be the set of vertices $\{v + (i,j): i,j \in \{0,\ldots,a-1\}\}$, where co-ordinate addition is taken modulo $ab$. Thus $B_v$ contains $a^2$ vertices, including $v$ itself. Let $U:= \{v \in V(C_{ab}^2): v_1 \equiv v_2 \equiv 0 \text{ mod } a\}$. The collection of boxes $\mathcal{B}:=\{B_v: v \in U\}$ partition the vertices of $C_{ab}^2$ into $b^2$ boxes. 

Let $U':= \{v=(v_1,v_2) \in U: v_1 \not=0, v_2 \not= 0\}$. Let $W$ be the first $a^{t} - (2b - 1) < (b-1)^2$ vertices of $U'$ under the lexicographic ordering (this is possible by choice of $t$). Define $H$ to be the subgraph of $C_{ab}^2$ induced by the vertices 
\begin{equation}\label{Hdef}
\left(\bigcup_{\substack{v \in U\\ v_1 = 0}}B_v\right) \cup \left(\bigcup_{\substack{v \in U\\ v_2 = 0}}B_v\right) \cup \left(\bigcup_{w \in W}B_w\right).
\end{equation}
As $H$ is the union of $a^t$ disjoint boxes, $H$ contains $a^{t+2}$ vertices. See Figure~\ref{counter} for an example of such an $H$.

\begin{figure}
\centering
\includegraphics[width=0.5\textwidth]{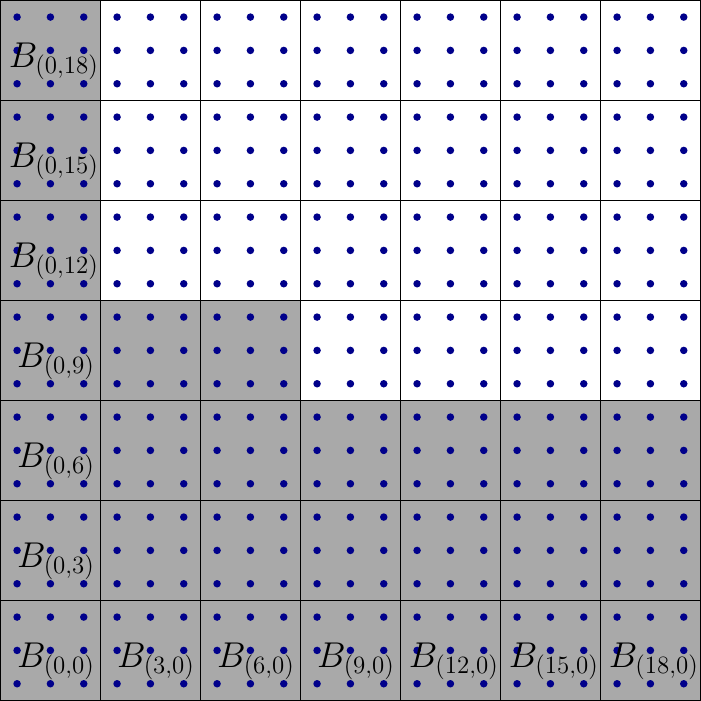}
\caption{An example of the graph $H \subseteq C_{21}^2$ constructed when $a=3$ and $b=7$. Precisely $a^3 = 27$ boxes of vertices are contained in $H$.}
\label{counter}
\end{figure}

We can partition the vertices of $C_{ab}^n$ into $a^n$ disjoint equivalence classes, by considering the value of each co-ordinate mod $a$. More formally, $(x_1,\ldots,x_n) \sim (y_1,\ldots,y_n)$ if and only if $x_i \equiv y_i$ mod $a$ for all $i \in [1,n]$. Each equivalence class has cardinality $b^n$.

The main point in the proof is to show that, for any $n$, any copy of $H$ in $C_{ab}^n$ intersects each equivalence class on 0 modulo $a$ points. As $a$ and $b$ are coprime, this implies that a particular equivalence class cannot be covered by vertex-disjoint copies of $H$. This then concludes the proof of the theorem.

First observe that every box in $C_{ab}^2$ contains exactly one point of each of the $a^2$ equivalence classes. As $H$ consists of $a^t$ disjoint boxes, it intersects each equivalence class in $C_{ab}^2$ on 0 modulo $a$ points.

We will now show that in any copy of $H$, two vertices may differ only in two coordinates. Let $\phi$ be any isomorphism from $H$ to a subgraph  $H'$ of $C_{ab}^n$. As $C_{ab}^n$ is vertex-transitive, we may assume that $\phi(0,0) = (0,0)\times 0^{n-2}$, $\phi(0,1) = (0,1) \times 0^{n-2}$ and $\phi(1,0) = (1,0) \times 0^{n-2}$. We will show that $\phi(u) = u \times 0^{n-2}$ for every $u \in V(H)$. This along with the conclusion of the previous paragraph implies that any copy of $H$ in $C_{ab}^n$ intersects each equivalence class on 0 modulo $a$ points, as required. 

$H$ contains two cycles $C_1$ and $C_2$ of length $ab$ that intersect $(0,0)$. 
As $ab$ is odd, the only cycles of length $ab$ in $C_{ab}^n$ are those given by $\{v + \ell\cdot e_i^n: \ell \in \{0,\ldots,ab-1\}\}$ for some $i \in \{1,\ldots,n\}$. Therefore for each vertex $w \in C_1 \cup C_2$, $\phi(w) = w \times 0^{n-2}$. (Once one edge of the cycle is fixed by $\phi$ there is only one choice for the others.) 

Let $v_1v_2v_3v_4$ be a copy of $C_4$ in $H$. If $\phi(v_i) = v_i \times 0^{n-2}$ for $i \in \{1,2,3\}$, then $\phi(v_4) = v_4 \times 0^{n-2}$. Thus to show that $\phi(u) =u \times 0^{n-2} $ for all $u \in V(H)$, it suffices to observe that there exists an ordering $v_1,\ldots, v_{a^{t+2}}$ of $V(H)$ such that:
\begin{itemize}
\item $\{v_1, \ldots, v_{2ab - 1}\} = V(C_1 \cup C_2)$, and
\item for each $i \ge 2ab$, there exist $j_1, j_2, j_3 < i$ such that $\{v_i, v_{j_1}, v_{j_2}, v_{j_3}\}$ is a copy of $C_4$ in $H$.
\end{itemize}
Hence as vectors, $\phi(u)= u \times 0^{n-2}$ for all $u \in V(H)$. This completes the proof of Theorem~\ref{th:unpackabletori}.

\end{proof}

We briefly draw attention to the reason this proof works for $ab$ a product of two odd coprime integers and not for $k$ even. There are many even cycles of length $k$ in $C_{k}^n$, but the only cycles of length $ab$ in $C_{ab}^n$ are those obtained by changing a single co-ordinate (i.e.~of the form $\{v + \ell\cdot e_i^n: \ell \in [0,ab-1]\}$, for some $v \in C_k^n$ and some $i \in [n]$). So in the $ab$ case, we have a lot of control over which subgraphs in $C_k^n$ are isomorphic to our choice of $H$. 

\section{Proof of Theorem~\ref{th:counter}}\label{sec:edges}

Before proceeding with the proof of Theorem~\ref{th:counter}, we state a few preliminary definitions. For an edge $uv$ in $E(Q_d)$, the \emph{direction} of $uv$, denoted $D(uv)$ is the unique co-ordinate $i$ such that $u_i \not= v_i$. For a subgraph $H \subseteq Q_d$, let $E_i(H)$ be the set of edges of $H$ with direction $i$ and let $D(H):= \{i : E_i(H) \not= \emptyset \}$. Say that a subgraph $H \subseteq Q_d$ is \emph{stiff} if for every $n \ge d$ and every subgraph $H'$ of $Q_n$ admitting an isomorphism $\phi$ to $H$, there exists a bijection $f: D(H) \rightarrow D(H')$ such that for each $i \in D(H)$, $\phi(E_i(H')) = E_{f(i)}(H)$. In other words, the partition $\phi(E_1(H')),\ldots, \phi(E_n(H'))$ of $E(H)$ is the same partition as $E_1(H), \ldots, E_n(H)$. 

\begin{proof}[Proof of Theorem~\ref{th:counter}]
For $k \ge 5$, we will show there exists a graph $H_k$ with the following properties.
\begin{enumerate}
\item $Q_{k-1} \subseteq H_k \subseteq Q_{k}$.
\item $|E_i(H_k)| = 2^{k-1} - 1$ for all $i \in \{1,\ldots,k\}$.
\item Every vertex of $H_k$ has degree at least $k -1$.
\item $H_k$ is stiff. 
\end{enumerate}

Given such a graph $H_k$, we show that for no $n \ge k$ can the edges of $Q_n$ be partitioned into edge-disjoint copies of $H_k$. Fix $n$ and consider $E_1(Q_n)$. By definition of $Q_n$, $E_1(Q_n)$ contains $2^{n-1}$ edges. As $H_k$ is stiff, any copy of $H_k$ in $Q_n$ intersects $E_1(Q_n)$ on precisely $2^{k-1} - 1$ edges. However, $|E_1(Q_n)| = 2^{n-1}$, which is not divisible by $ 2^{k-1} - 1$. Therefore it is not possible to cover every edge of $E_1(Q_n)$ using edge-disjoint copies of $H_k$ and hence it is not possible to cover $E(Q_n)$. 

Thus it suffices to show the existence of $H_k$, for $k \ge 5$. We will construct the graphs $H_k$ inductively. Define $H_5$ to be the graph obtained from $Q_5$ by deleting the edges between: $(0,1,1,0,1)$ and $(1,1,1,0,1)$, $(1,0,1,1,0)$ and $(1,1,1,1,0)$, $(1,0,0,0,1)$ and $(1,0,1,0,1)$, $(1,1,0,0,0)$ and $(1,1,0,1,0)$, and finally $(1,0,0,1,0)$ and $(1,0,0,1,1)$. The graph $H_5$ is depicted in Figure~\ref{H4}. Observe that $H_5$ is a graph obtained from $Q_5$ by removing exactly one edge in each direction in such a way that no vertex has degree 3. 

\begin{figure}[htbp]
\centering
\includegraphics[width=0.6\textwidth]{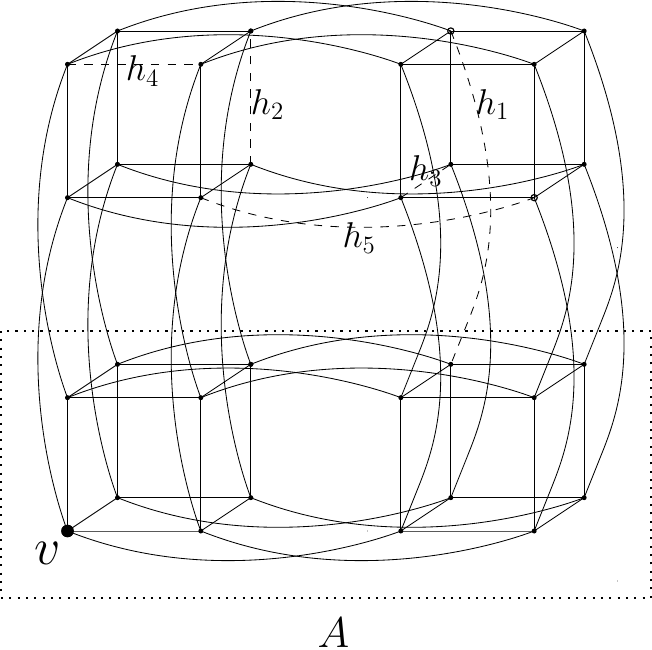}
\caption{The graph $H_5$. The vertex $v = (0,0,0,0,0)$ is emboldened. The edges $h_1,\ldots,h_5$, represented by dashed lines, are the edges removed from $Q_5$. The copy of $Q_4$ is induced by the vertices in $A$.}
\label{H4}
\end{figure}
Suppose we have constructed $H_k$. Now construct $H_{k+1}$ as follows. Let $H_{k+1}$ be a subgraph of $Q_{k+1}$ where: 
\begin{itemize}
\item[(a)] the vertices $V_0 := \{(v_1,\ldots,v_{k+1}) \in V(Q_{k+1}): v_1 = 0\}$ induce a copy of $H_k$; 
\item[(b)] the vertices $V_1:= \{(v_1,\ldots,v_{k+1}) \in V(Q_{k+1}): v_1 = 1\}$ induce a copy of $Q_{k}$, and;
\item[(c)]  every edge of $Q_{k+1}$ between $V_0$ and $V_1$ is contained in $H_{k+1}$ except one which is not incident to any non-edge of $H_{k+1}[V_0]$.
\end{itemize}
 It is always possible to satisfy (c) as there are precisely $k$ non-edges in $H_{k+1}[V_0]$ and these edges are incident with $2k < 2^k$ vertices. Observe that there are many valid choices of $H_{k+1}$: it is not unique.
 
 By construction each $H_k$ satisfies properties 1,2 and 3. The only thing remaining to prove is that each $H_k$ is stiff. 
 
 $H_k$ contains a subgraph $A$ isomorphic to $Q_{k-1}$. Let $v$ be a vertex of $A$ of degree $k$ in $H_k$ (which exists by construction). The edges $\{e_1,\ldots,e_k\}$ incident to $v$ have different directions. Without loss of generality, suppose that $D(e_i) = i$ for each $i \in [k]$. Now consider a subgraph $H'$ of $Q_n$ such that $\phi$ is an isomorphism from $H$ to $H'$. Up to permuting co-ordinates, we may assume that $D(\phi(e_i))= i$ for each $i \in [k]$. Say that an edge of $H$ is \emph{good} if $D(e) = D(\phi(e))$. To prove that $H$ is stiff it suffices to show that every edge in $H$ is good. 
 
 By definition, $e_1,\ldots,e_k$ are good. Suppose $v_1v_2v_3v_4$ is a copy of $C_4$ in $H$. If $v_1v_2$ and $v_2v_3$ are good, then so are both $v_3v_4$ and $v_4v_1$. Thus it suffices to exhibit an ordering $f_1,f_2,\ldots$ of the edges of $H_k$ such that  $f_i:= e_i$ for $i \in [k]$ and for each $i \ge k + 1$, there exist $j_1,j_2 < i$ such that $f_{j_1}$ and $f_{j_2}$ are edges of different directions and such that $f_i$ is contained in a copy of $C_4$ with $f_{j_1}$ and $f_{j_2}$. 
 
As $A$ is isomorphic to $Q_{k-1}$, clearly there exists such an ordering of the edges in $A$. Given this, it is not difficult to see that this ordering can be extended to an ordering of $E(H_k)$ with the required properties (as, by construction, every vertex in $H_k$ has degree at least $k-1$). Thus, every edge in $H_{k}$ is good and $H_{k}$ is stiff. This completes the proof of the Theorem. 
 
 \end{proof}

\section{Conclusion}\label{thatsall}

As we have shown in Section~\ref{tor2}, the extension of Conjecture~\ref{vconj} from the hypercube to arbitrary Cartesian products is false (see Theorem~\ref{th:unpackabletori}). However, it is natural to hope that there might be some extension of Theorem~\ref{imre} to general infinite vertex-transitive graphs. Unfortunately, this also turns out to be false. 

\begin{thm}
There exists an infinite vertex-transitive graph $G$ and a finite subgraph $H \subseteq G$ such that $G^n$ does not admit a perfect induced $H$-packing for any $n$.
\end{thm}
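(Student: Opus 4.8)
The plan is to reuse the "odd coprime" obstruction from Theorem~\ref{th:unpackabletori}, but now with $G$ an \emph{infinite} vertex-transitive graph built precisely so that the rigidity argument works directly, without needing $H$ to wrap inside a finite torus. Concretely, I would take $G$ to be the Cayley graph of $\mathbb{Z}_a \times \mathbb{Z}^{(\infty)}$ for a suitable odd $a$, or, more cleanly, the Cartesian product $G = C_a \,\square\, P_\infty$ where $P_\infty$ is the two-way-infinite path; better still, to keep an infinite supply of coordinate directions, let $G = C_a \,\square\, C_b \,\square\, \mathbb{Z}$ for odd coprime $a,b$ (or even just $G = C_a \,\square\, \mathbb{Z}$). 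The point is that $G^n$ is then an infinite torus-times-grid in which the only "short odd cycles" are copies of $C_a$ (and $C_b$) obtained by moving in a single coordinate, exactly as in the finite case.

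The key steps, in order, would be: (1) Fix odd coprime $a<b$ and construct a finite subgraph $H$ of $G$ by imitating the box construction of Section~\ref{tor2}: partition a piece of $C_a \square C_b$ (or of $C_{ab}$) into $a\times a$ boxes, and let $H$ be the union of an appropriate number of boxes so that $|V(H)|$ is a power of $a$, arranged so that $H$ contains two $a$-cycles $C_1,C_2$ through a common vertex and so that $H$ is "$C_4$-connected" after those cycles are pinned (the ordering condition on the $v_i$'s). This uses only that $|V(H)|$ is a power of $a$, so the divisibility half of the base conditions is immediate, and the other half (every vertex of $G$ lies in a copy of $H$) holds since $G$ is vertex-transitive and contains $H$. (2) Define the equivalence relation on $V(G^n)$ by reducing every coordinate modulo $a$ in the $C_a$-factors and trivially in the $\mathbb{Z}$-factor; verify that every $a\times a$ box meets each class in exactly one point, so $H$ meets each class in $0 \bmod a$ points. (3) Prove the rigidity lemma: any isomorphic copy $H'$ of $H$ in $G^n$ is a translate of $H\times 0$. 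For this, fix an edge of $C_1$; since $ab$ (hence $a$) is odd, every $a$-cycle in $G^n$ is a "coordinate cycle" in one of the $C_a$-factors, so $\phi$ is forced on $C_1\cup C_2$; then propagate through the $C_4$'s exactly as in the proof of Theorem~\ref{th:unpackabletori}. (4) Conclude: a single equivalence class is infinite but each copy of $H$ in a packing meets it in $0\bmod a$ vertices, while (after one more coprimality bookkeeping step — choosing $|V(H)|=a^{t+2}$ with $\gcd(a,b)=1$ so that a class of size "$\equiv b^n \not\equiv 0 \bmod a$ locally" cannot be exactly tiled) no perfect packing can cover that class. Here I would need to be a little careful, since the class is infinite: the right statement is that in any putative perfect $H$-packing, restrict attention to a large finite sub-box $C_a^n \square [0,M)$; the number of copies of $H$ lying entirely inside it times $a^{t+2}$, plus a bounded boundary error, must equal $b^n M$, and the mod-$a$ obstruction from step (3) forces a contradiction as $M\to\infty$. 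Alternatively, and more simply, arrange the $\mathbb{Z}$-factor so that no copy of $H$ can use it at all (make $H$ "wide" in the $\mathbb{Z}$-direction but the obstruction purely in the $C_a$-factor), reducing everything to a genuinely finite counting argument on $C_a^n \square \{pt\}$.

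The main obstacle I expect is step (4), the transition from the finite divisibility contradiction to the infinite setting: unlike Theorem~\ref{th:unpackabletori}, where one fixes a single finite equivalence class and invokes divisibility outright, here the natural "class" is infinite and a perfect packing could in principle compensate across unboundedly many copies. The cleanest fix is to engineer $G$ so that the obstruction lives entirely in a finite factor $C_a^n$: take $G = C_a \,\square\, G_0$ where $G_0$ is some infinite vertex-transitive graph (e.g.\ $\mathbb{Z}$), build $H$ so that its projection to the $C_a$-coordinates is the box-union from Section~\ref{tor2} while its $G_0$-projection is rigid enough that copies of $H$ in $G^n = C_a^n \,\square\, G_0^n$ cannot "tilt" between the $C_a$-part and the $G_0$-part (using that $a$ is odd to kill the relevant short cycles), so that counting per fixed $G_0^n$-slice gives a finite mod-$a$ contradiction on each copy of $C_a^n$. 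Establishing this non-tilting rigidity — essentially an infinite-graph analogue of the "stiff"/coordinate-cycle argument — is the technical heart, but it is a direct adaptation of the two rigidity arguments already given in Sections~\ref{tor2} and~\ref{sec:edges}.
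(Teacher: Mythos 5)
Your overall strategy --- transplanting the odd-coprime rigidity obstruction of Theorem~\ref{th:unpackabletori} into a product of a finite odd structure with $\mathbb{Z}$ --- is the right one, and the ``fallback'' you mention at the end of step (4) (engineer things so that no copy of $H$ can use the $\mathbb{Z}$-factors at all, reducing to a finite slice) is essentially what the paper does. The paper takes $G=G_1\,\Box\,\mathbb{Z}$ where $G_1$ is the Cayley graph of $\mathbb{Z}_{ab}$ with generators $\{\pm1,\pm2\}$, lets $H$ be the subgraph of $G_1^2$ induced on the same vertex set as in Theorem~\ref{th:unpackabletori}, and argues that in $G^n=G_1^n\,\Box\,\mathbb{Z}^n$ the horizontal and vertical copies of $G_1$ inside $H$ can only embed as coordinate lines in the $G_1$-factors; rigidity then propagates exactly as in Section~\ref{tor2}, so every copy of $H$ has constant $\mathbb{Z}^n$-coordinates, lies in a single finite slice $G_1^n\times\{z\}$, and a perfect packing of $G^n$ would restrict to a perfect packing of each such slice, contradicting Theorem~\ref{th:unpackabletori} after forgetting the chord edges. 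No limiting argument is needed.

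As your primary route stands, however, there are two genuine gaps. First, the choice $G=C_a\,\Box\,\mathbb{Z}$ kills the obstruction: the divisibility argument of Section~\ref{tor2} needs the finite factor to be $C_{ab}$ with $a,b$ odd and coprime, so that the mod-$a$ equivalence classes in a slice have size $b^n\not\equiv 0\pmod a$ while every copy of $H$ meets each class in $0\bmod a$ points. In $C_a^n$ the classes are singletons and $|V(H)|$, a power of $a$, divides $a^n$, so there is nothing to contradict; and $C_a\,\Box\,C_b$ is not $C_{ab}$, so the box construction does not carry over verbatim either. Second, the box-with-boundary-error limiting argument you propose as the main resolution of step (4) cannot work: the conclusion you need is a congruence modulo $a$, and a congruence is destroyed by any nonzero boundary contribution, however small relative to the volume. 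The only viable resolution is the confinement argument above --- copies of $H$ cannot tilt into the bipartite $\mathbb{Z}$-directions because $H$ contains odd cycles (of length exactly $ab$) whose embeddings are forced onto coordinate lines in the odd factors --- which you correctly identify but should promote from a fallback to the main argument, with $C_{ab}$ (or the chorded version $G_1$) in place of $C_a$.
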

We provide a sketch of the proof, as a lot of the ideas involved are analogous to ideas developed in detail in the proof of Theorem~\ref{th:unpackabletori}.

\begin{proof}[Sketch proof]
Let $a < b$ be odd coprime integers and let $G_1$ be $C_{ab}$. Define $G$ to be the Cartesian product of $G_1$ with $\mathbb{Z}$ (that is, $G:= G_1 \Box \mathbb{Z}$). So $G^n := G_1^n \Box \mathbb{Z}^n$.

Say that a \emph{line} of $G^n$ is a subgraph of $G^n$ obtained by fixing $2n-1$ co-ordinates (and letting the unfixed co-ordinate vary). Let $H_1$ be a copy of $G_1$. For any $n$, the only way to embed $H_1$ in $G^n$ is as a line. Indeed, if we first embed $v_1$ at some vertex $x$ and then go around the cycle embedding each edge sequentially, then after embedding $ab$ edges we arrive back at $x$. Each edge embedded corresponds to a change in one coordinate of $+1$ or $-1$. So either a coordinate increases and decreases exactly the same number of times (so in total changes an even number of times) as we move around the cycle, or it changes $ab$ times in the same direction (in which case it arrives back where it started). As $ab$ is odd, the only way for $H_1$ to be embedded is as a line, where some coordinate corresponding to a copy of $G_1$ in $G^n$ is varied.

Now let $H$ be the subgraph of $G_1^2$ induced by the vertices in (\ref{Hdef}). In the proof of Theorem~\ref{th:unpackabletori}, we saw that $H$ contains \emph{vertical} and \emph{horizontal} cycles which must embed as lines. As in the proof of Theorem~\ref{th:unpackabletori}, once these cycles are embedded, the embedding of the rest of $H$ is determined. In particular, the only coordinates in the embedding of $H$ which vary are those which vary on the embedding of the cycles. In particular, the coordinates corresponding to $\mathbb{Z}^n$ remain fixed in the embedding of $H$.

Therefore, if there were to exist a perfect $H$-packing of $G^n$ for some $n$, (forgetting appropriate edges) we get a packing of $C_k^n$ with the $H$ from Theorem~\ref{th:unpackabletori}. However, such a packing cannot exist (as shown by Theorem~\ref{th:unpackabletori}).
\end{proof}

In this paper we have determined that for any even $k$ the base conditions on $H$ are sufficient to guarantee a perfect induced $H$-packing of $C_k^n$ for $n$ sufficiently large (Theorem~\ref{th:packabletori}). We have also shown that when $k$ is odd and not a prime power, the base conditions are not sufficient (Theorem~\ref{th:unpackabletori}). Thus it is natural to wonder what happens when $k$ is an odd prime power.

\begin{conj}
Let $k$ be an odd prime power and $m\ge1$.  Suppose that  $H$ is an induced subgraph of $C_k^m$ such that $|V(H)|$ divides $k^m$. Then there exists $n_0$ such that, for all $n \ge n_0$, $C_k^n$ admits a perfect induced $H$-packing.
\end{conj}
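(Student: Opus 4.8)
The plan is to follow the strategy of Theorem~\ref{th:packabletori} as closely as possible, using Theorem~\ref{th:cover} as the engine: it suffices to produce, for some large $n_0$, an $r$-cover and a $(1 \bmod r)$-cover of $C_k^{n_0}$ by induced copies of $H$, where $r := |V(H)|$. The $r$-cover is free from Claim~\ref{lem:cover} (translates), exactly as before; it uses nothing about the parity or prime-power structure of $k$. So the entire content is again in constructing a $(1 \bmod r)$-cover, i.e.\ in proving an analogue of Lemma~\ref{lem:evenpack} for odd prime-power $k$. The obvious difficulty is that the reduction in the proof of Theorem~\ref{th:packabletori} replaced $H$ by an isomorphic ``non-wrapping'' copy $\widetilde H$ living inside a hypercube inside $C_k^{nm}$; this crucially used that $k$ is even so that $C_k$ embeds as an induced non-wrapping subgraph of some $Q_n$. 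When $k$ is odd this is simply false, so the first task is to find the right replacement for the non-wrapping reduction.

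The natural substitute is to work directly in $C_k$ rather than passing through $Q_n$. When $k = p^a$ is an odd prime power, I would look for an induced path $P$ on $k-1$ vertices inside $C_k$ (delete one vertex of the cycle), and more generally try to realise any induced subgraph $H$ of $C_k^m$ as an induced subgraph of a high-dimensional grid-like region of $C_k^{m'}$ that avoids coordinate values near $k-1$, so that translates and bends remain well defined and induced. The key structural fact one hopes to exploit is that, since $k = p^a$, the cyclic group $\mathbb{Z}_k$ has a unique maximal proper subgroup and a very rigid lattice of subgroups; this should let one control which subgraphs of $C_k^n$ are isomorphic to a given $H$, in the spirit of the cycle-rigidity argument used in Section~\ref{tor2}, and in particular rule out the ``upside-down copy'' pathology flagged in the remark after Lemma~\ref{lem:evenpack}. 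Once a suitable notion of ``restricted copy'' (closed under translates and a prime-power analogue of bends) is set up, Lemmas~\ref{lem:layered} and \ref{lem:evenpack} should go through by the same induction on $m$: peel off the top layer $H_\tau$, cover it in high dimension by the inductive hypothesis, extend each restricted copy of $H_\tau$ to a restricted copy of $H$ using properties (P1)--(P2), then fan out $r$ bent copies and translate to build a $(1,0,\ldots,0,-1)$-layered cover, which Lemma~\ref{lem:layered} upgrades to a $(1 \bmod r)$-cover.

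The main obstacle I expect is precisely the bending operation for odd $k$. In the even case the bend $S^{n,s}_{i,j}$ reflects a set sitting in coordinates $\{0,\ldots,k-2\}$ into a new direction, and non-wrapping guarantees the result is an induced copy; for odd prime-power $k$ one needs an analogous move that does not create spurious edges or collisions, which may force a more delicate definition (perhaps bending at a coordinate value around $k/2$ and using that $k$ is odd so there is a well-defined ``middle''). A secondary obstacle is the base reduction: proving that \emph{every} induced $H \subseteq C_k^m$ has an isomorphic copy inside some non-wrapping region of a higher power of $C_k$, which in the even case came for free from the hypercube embedding and here must be argued directly, possibly by induction on $m$ together with the layer decomposition. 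If both of these can be handled, the divisibility hypothesis $|V(H)| \mid k^m$ enters exactly as in the proof of Lemma~\ref{lem:layered} (via $t := k^m/|V(H)|$ and repeated application of Claim~\ref{claim}), and the prime-power hypothesis is used only to secure rigidity of the relevant cycles/subgroups and hence the soundness of the bend construction.
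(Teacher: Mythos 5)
This statement is stated in the paper as an open conjecture: the authors give no proof of it, and indeed the whole point of Section~\ref{thatsall} is that the case of odd prime powers remains unresolved. So there is nothing in the paper to compare your argument against, and your proposal must stand on its own as a proof --- which it does not. You have written a strategy outline whose two central steps you yourself flag as unresolved (``The main obstacle I expect\ldots'', ``If both of these can be handled\ldots''), and that conditional structure means no claim has actually been established.

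More importantly, the first of your two obstacles is not merely difficult but provably insurmountable in the form you propose. Your plan requires realising every induced $H \subseteq C_k^m$ as a non-wrapping copy inside some higher power $C_k^{m'}$, so that translates and bends produce induced copies. Take $H = C_k$ itself (or any $H$ containing a full coordinate cycle). The paper's own rigidity argument in Section~\ref{tor2} shows that for odd $k$ the \emph{only} cycles of length $k$ in $C_k^n$ are the coordinate lines $\{v + \ell\cdot e_i^n : \ell \in [0,k-1]\}$; every such cycle wraps, so no isomorphic copy of $H$ avoids coordinate value $k-1$ in the relevant direction, and the bend $S^{n,s}_{i,j}$ is never applicable to it. This rigidity is exactly what powers the counterexample of Theorem~\ref{th:unpackabletori}; invoking the subgroup lattice of $\mathbb{Z}_{p^a}$ to ``control which subgraphs are isomorphic to $H$'' gives you more rigidity, not less, and rigidity here works against the bending machinery rather than for it. A proof of the conjecture would need a genuinely new mechanism for producing a $(1 \bmod r)$-cover when $H$ wraps --- presumably one that exploits the prime-power hypothesis to rule out the mod-$a$ counting obstruction of Section~\ref{tor2} --- and your proposal does not supply one.
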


In Section~\ref{sec:edges} we considered the question of whether, for a given graph $H$, there exists $d$ such that it is possible to cover the edges of $Q_d$ with edge-disjoint copies of $H$. We showed that the necessary conditions that $H$ is a subgraph of $Q_d$ and that $|E(H)|$ divides $|E(Q_d)|$ are not sufficient. When $H$ is a path and 
suitable divisibility conditions hold, 
Erde~\cite{erde} and Anick and Ramras~\cite{anram} independently showed that 
there is a partition of $E(Q_n)$ into edge-disjoint copies of $H$. It would be interesting to prove analogous results for other graphs. 

\begin{qn}
For which graphs $G$ does there exist some $d$ such that the edges of $Q_d$ can be partitioned into edge disjoint copies of $d$?\footnote{We remark that this question differs slightly from the published version of this paper.}
\end{qn}

Naturally, in order for this to be possible we require that $|E(G)|$ divides $|E(Q_d)|$ and that it is possible to express $d-1$ as the sum of degrees of vertices in $G$. For example, it seems likely that this should be sufficient when $G$ is an even cycle.

\begin{ack}
The authors would like to thank the anonymous referees for their very helpful comments.
\end{ack}

\bibliography{tori}

\providecommand{\bysame}{\leavevmode\hbox to3em{\hrulefill}\thinspace}
\providecommand{\MR}{\relax\ifhmode\unskip\space\fi MR }
\providecommand{\MRhref}[2]{%
  \href{http://www.ams.org/mathscinet-getitem?mr=#1}{#2}
}
\providecommand{\href}[2]{#2}
\begin{thebibliography}{1}

\bibitem{anram}
D.~Anick and M.~Ramras, \emph{Edge decompositions of hypercubes by paths},
  Australas. J. Combin. \textbf{61} (2015), 210--226.

\bibitem{erde}
J.~Erde, \emph{Decomposing the cube into paths}, Discrete Math. \textbf{336}
  (2014), 41--45.

\bibitem{adam2}
The~Math Forum, \emph{Two tiling problems},
  \url{http://mathforum.org/kb/message.jspa?messageID=6223965}, retrieved on 7
  November 2016.

\bibitem{vyt}
V.~Gruslys, \emph{Decomposing the vertex set of a hypercube into isomorphic
  subgraphs}, arXiv:1611.02021v1, preprint, November 2016.

\bibitem{imre1}
V.~Gruslys, I.~Leader, and Ta~S. Tan, \emph{Tiling with arbitrary tiles}, Proc.
  Lond. Math. Soc. (3) \textbf{112} (2016), 1019--1039.

\bibitem{imreagain}
V.~Gruslys, I.~Leader, and I.~Tomon, \emph{Partitioning the boolean lattice
  into copies of a poset}, J. Combin. Theory Ser. A \textbf{161} (2019),
  81--98.

\bibitem{shovyt}
V.~Gruslys and S.~Letzter, \emph{Almost partitioning the hypercube into copies
  of a graph}, arXiv:1612.04603v1, preprint, December 2016.

\bibitem{adam1}
MathOverflow, \emph{Does every polynomio tile $\mathbb{R}^n$ for some $n$?},
  \url{http://mathoverflow.net/questions/49915/does-every-polynomio-tile-rn-for-some-n},
  retrieved on 7 November 2016.

\bibitem{offner}
D.~Offner, \emph{Packing the hypercube}, Discuss. Math. Graph Theory
  \textbf{34} (2014), no.~1, 85--93.

\end{thebibliography}
  \bibliographystyle{amsplain}
\end{document}